\newcolumntype{C}[1]{>{\centering\let\newline\\\arraybackslash\hspace{0pt}}m{#1}}
\providecommand{\U}[1]{\protect\rule{.1in}{.1in}}
\providecommand{\gb}{\gamma_b}
\providecommand{\mlp}{\operatorname{mp}}
\providecommand{\diam}{\operatorname{diam}}
\providecommand{\rad}{\operatorname{rad}}
\renewcommand{\mod}{\operatorname{mod}\ }
\renewcommand{\vec}[1]{\mathbf{#1}}
\setlist{nolistsep}
\newtheorem{theorem}{Theorem}
\newtheorem{algo}{Algorithm}
\newtheorem{coro}[theorem]{Corollary}
\newtheorem{lemma}[theorem]{Lemma}
\newtheorem{problem}{Problem}
\newtheorem{prop}[theorem]{Proposition}
\newenvironment{proof}[1][Proof]{\noindent\textbf{#1.} }{\ \hfill \rule{0.5em}{0.5em}}
    \newcounter{example}
    \newenvironment{example}[1][]{\refstepcounter{example}\par\medskip\noindent%
       \textbf{Example~\theexample. #1} \rmfamily}{\medskip}
\tikzstyle{std}=[ circle, draw=black,fill=black,thick, inner sep=1pt, minimum size=1.5mm]
\tikzstyle{ir}=[ circle, draw=black,fill=green,thick,  inner sep=2pt, minimum size=2mm]
\tikzstyle{mp}=[circle, draw=black,fill=Dandelion,thick,  inner sep=2pt, minimum size=2mm]
\tikzstyle{bred}=[circle, draw=black,fill=red,thick,  inner sep=2pt, minimum size=2mm]
\tikzstyle{smred}=[ circle, draw=black,fill=red,thick,  inner sep=1pt, minimum size=1.5mm]
\tikzstyle{bblue}=[ circle, draw=black,fill=blue,thick,  inner sep=1pt, minimum size=1.5mm]
\tikzstyle{tr}=[color=black, style=dotted]
\tikzstyle{sp}=[color=ProcessBlue, line width = 2pt]
\begin{document}

\title{\textbf{Multipackings of Graphs}}
\author{L. E. Teshima\\Department of Mathematics and Statistics\\University of Victoria, P. O. Box 3045, Victoria, BC\\\textsc{Canada} V8W 3P4\\{lteshima@uvic.ca}}
\maketitle

\begin{abstract}
A set $M\subseteq V$ is called a \emph{multipacking} of a graph $G=(V,E)$ if,
for each $v\in V$ and each $s$ such that $1\leq s\leq\operatorname{diam}(G)$,
$v$ is within distance $s$ of at most $s$ vertices in $M$. The
\emph{multipacking number}, denoted $\mlp(G)$, is the maximum cardinality of a
multipacking of $G$.  A \emph{dominating broadcast} of $G$ is a function $f:V\rightarrow
\{0,1,\dots,\operatorname{diam}(G)\}$ such that $f(v)\leq e(v)$ (the
eccentricity of $v$) for all $v\in V$ and such that each vertex is within
distance $f(v)$ from a vertex $v$ with $f(v)>0$. The \emph{cost} of a
broadcast $f$ is $\sigma(f)=\sum_{v\in V}f(v)$, and the \emph{broadcast
number} $\gamma_{b}(G)$ is the minimum cost of a dominating broadcast.  
In this paper, we review a variety of recent results in the study of the dual graph parameters $\mlp$ and $\gb$.  
\end{abstract}

\noindent\textbf{Keywords:\hspace{0.1in}}Broadcast Domination, Multipackings, Fractional Multipackings, Farber's Algorithm, Balanced Matrices

\noindent\textbf{AMS Subject Classification 2010:\hspace{0.1in}}05C69, 05C70, 05C72 05C85, 90C27


\section{Introduction}
\label{sec:intro}

A \emph{broadcast} on a connected graph $G=(V,E)$ is a function $f:V\rightarrow\left\{
0,1,\dots,\operatorname{diam}(G)\right\}  $ such that $f(v)\leq e(v)$,
  for all $v\in V$, where $e(v)$ is the eccentricity of $v$. The set of \emph{broadcast vertices} $V_f^+= \{v: f(v)\geq 1\}$ is the set of vertices that transmit the broadcast.  A vertex $v$ is said to \emph{hear} a broadcast if there exists some broadcast vertex $u$ such that $d(u,v) \leq f(u)$.  
The $k$-neighbourhood of a vertex $v$ is the vertex subset $N_k[v]=\{u \in V: d(v,u) \leq k\}$.  If $u$ is a broadcast vertex, $v$ hears the broadcast from $u$ if and only if $v$ is in the \emph{broadcast neighbourhood} $N_{f}[u] = N_{f(u)}[u]$  of $u$.  
We say that $f$ is a \emph{dominating
broadcast } if each vertex of $G$ hears a broadcast.  The \emph{cost} of a broadcast $f$ is
$\sigma(f)=\sum_{v\in V}f(v)$, and the \emph{broadcast number} of $G$ is
$\gamma_{b}(G)$, is the minimum cost of a dominating broadcast. 
A broadcast is \emph{efficient} if each vertex hears exactly one broadcast.  Conversely, a vertex is said so be \emph{over-dominated} if it hears multiple broadcasts.  
A dominating broadcast $f$ of $G$ such that $\sigma
(f)=\gamma_{b}(G)$ is called a $\gamma_{b}$-\emph{broadcast}.

If $f$ is a
dominating broadcast such that $f(v)\in\{0,1\}$ for each $v\in V$, then
$\{v\in V:f(v)=1\}$ is a \emph{dominating set} of $G$; the smallest
cardinality of a dominating set is the \emph{domination number }$\gamma(G)$.  A dominating subset $X\subseteq V(G)$ with $|X| = \gamma(G)$ is called a $\gamma$-\emph{set} of $G$.

Broadcast domination was introduced as a generalization of ordinary domination by Erwin in his 2001 doctoral dissertation as \emph{cost domination} \cite{Ethesis} (see also \cite{Epaper}).   Unlike ordinary domination, the minimum cost dominating broadcast problem can be solved in polynomial time  for general graphs \cite{HL}, and linear time for trees \cite{DabThesis, DDH}.   This has made broadcasting a popular new research topic with many recent publications on broadcasts on trees    \cite{ CHM, DHH,Herke, HM, Lunney, SS}    and general graphs \cite{ BHHM, BZ, BS, BMT, DEHHH,  LThesis}.

In 2013, Brewster, Mynhardt and Teshima \cite{BMT, LThesis} examined the minimum dominating broadcast problem as an integer programming (IP) problem.  The resulting dual property was dubbed a multipacking. Formally, a vertex subset $M$ is a  $k$-\emph{multipacking} if for each $v\in V$ and each integer $s$ for $1 \leq s \leq k$, $|M \cap N_s[v]| \leq s$.  The $k$-\emph{multipacking number} $\mlp_k(G)$ is the maximum cardinality of a $k$-multipacking of $G$.  When $k=\diam(G)$, $M$ is called a \emph{multipacking} and $\mlp_k(G)$ is the \emph{multipacking number}, $\mlp(G)$.

Similarly to broadcasts and domination, multipackings are a generalization of 2-packings. A vertex subset $Y$ of a graph $G=(V,E)$  is a $2$-\emph{packing} if for each $v \in V$, $|Y \cap N[v]| \leq 1$.  Thus, a 2-packing is a 1-multipacking.  The $2$-\emph{packing number} $\rho(G)$ is the size of a maximum 2-packing of $G$. 

Concepts not defined here can be found in \cite{CL, Corn, HHS, HHS2}.

\section{Broadcasts and Multipackings}
\label{sec:broadcast}

\subsection{Integer Programming Formulation}

A dominating broadcast on a graph $G$ can also been viewed as a covering of $%
G$ with $k$-neighbourhoods centred at each broadcast vertex $v$ and each $k\in  \{1,2,\dots, e(v)\}$. Thus a
dominating broadcast can be seen as a collection of balls $\mathcal{B}=\left\{ N_{k}[v]\right\} $
such that for each $u\in V$ there exists some $N_{k}[v]\in \mathcal{B}$ with 
$u\in N_{k}[v]$. 

Suppose $G=(V,E)$ is a graph with $V=\{v_1,v_2,\dots,v_n\}$.  Let $c$, indexed by $(v,k)$ (where $v\in V$ and $1 \leq k \leq e(v)$), be the \emph{cost vector} for the IP, and set each $c_{v,k}=k$.  Furthermore, define the vector $x$, also indexed by $(v,k)$, so that each $x_{v,k}$ is an indicator variable in the IP's solution.   That is, for the optimal broadcast $f$ found by the IP, 

\begin{equation*}
x_{v,k}=\left\{ 
\begin{tabular}{ll}
$1$ & if $f(v)=k$ \\ 
$0$ & otherwise.%
\end{tabular}%
\right.
\end{equation*}

\noindent Finally, let $A$ be the incidence matrix with its $n$ rows indexed by the vertices $v_i$, and its $m$ columns indexed by the pairs $(j,k)$, representing the $k$-neighbourhood of the vertex $v_j$.  The entries of $A$ are therefore,

\begin{align*} a_{i,(j,k)} =
	\begin{cases}
1 & \text{ if } v_i \in N_k[v_{j}], \\
0 & \text{ otherwise.}		
	\end{cases}
\end{align*}

\noindent We call $A$ the \emph{extended neighbourhood matrix} of $G$.  Thus the primal integer program (PIP) for a minimum cost broadcast, and the dual integer program (DIP) for a maximum multipacking, are as below. 

\hspace{5mm}\textbf{The Broadcast PIP: $B-PIP(G):$} 
\begin{align*}
\min & \ c \cdot x \\
\text{s.t.}&\ Ax \geq \vec{1}\\
\ x_{k,v} & \in \{0,1\}.
\end{align*}%

\hspace{5mm}\textbf{The Multipacking DIP: $MP-DIP(G):$} 
\begin{align*}
\max &\ y \cdot \vec{1} \\
\text{s.t.}&\ yA \leq c \\
 \ y_{u} & \in \{0,1\}.
\end{align*}%

\noindent Furthermore by the strong duality theorem of linear programs, we can concluded that for any graph $G$,
\begin{align*}
\mlp(G) \leq \gamma_b(G).
\end{align*}

\subsection{Bounds, Differences and Ratios}

In this section we present some recent results in comparing $\gb$ and $\mlp$.    We begin with one of the first results by Erwin, the bound presented in Proposition \ref{prop:erwinBound}.

\begin{prop}\emph{\cite{Ethesis}}\label{prop:erwinBound}  For every non trivial connected graph $G$,
	\begin{align*} \left\lceil \frac{\diam(G)+1}{3}\right\rceil \leq \gamma_b(G) \leq \min\{\rad(G), \gamma(G)\}. \end{align*}
\end{prop}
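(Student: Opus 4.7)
The plan is to prove the statement in three pieces: the two upper bounds by exhibiting explicit dominating broadcasts, and the lower bound via a counting argument on a diametrical path.

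For the upper bounds I would first pick a central vertex $c$ of $G$ (one with $e(c)=\rad(G)$) and define $f(c)=\rad(G)$, $f(v)=0$ otherwise; every vertex of $G$ is within distance $e(c)$ of $c$, so $f$ is a dominating broadcast of cost $\rad(G)$, giving $\gamma_b(G)\leq \rad(G)$. For the second upper bound I would take a $\gamma$-set $D$ and let $f$ be the $\{0,1\}$-function that is $1$ on $D$ and $0$ elsewhere. The constraint $f(v)\leq e(v)$ holds because $G$ is nontrivial and connected (so $e(v)\geq 1$ for every $v\in V$); the dominating property of $D$ makes $f$ a dominating broadcast, of cost $|D|=\gamma(G)$.

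The lower bound requires more care. Fix a $\gamma_b$-broadcast $f$ and a shortest path $P=v_0v_1\cdots v_d$ with $d=\diam(G)$. The key lemma I will establish is that for every vertex $w\in V$ and every integer $k\geq 0$, the ball $N_k[w]$ meets $V(P)$ in at most $2k+1$ vertices. This uses the standard fact that on any shortest path the graph distance between two of its vertices equals the distance along the path (otherwise $P$ could be shortcut to contradict $d_G(v_0,v_d)=d$); then any two vertices of $V(P)$ lying in $N_k[w]$ are at path-distance at most $2k$ by the triangle inequality, so they are contained in a subpath of $P$ with at most $2k+1$ vertices. Since each of the $d+1$ vertices of $P$ must hear the broadcast, summing over broadcast vertices yields
$$d+1 \ \leq\ \sum_{w\in V_f^+}\bigl(2f(w)+1\bigr)\ =\ 2\sigma(f)+|V_f^+|\ \leq\ 3\sigma(f),$$
where the last inequality uses $|V_f^+|\leq \sigma(f)$ because $f(w)\geq 1$ on $V_f^+$. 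Dividing by $3$ and taking ceilings (since $\sigma(f)=\gamma_b(G)$ is an integer) yields $\lceil(\diam(G)+1)/3\rceil\leq \gamma_b(G)$.

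The only mildly non-obvious step is the geometric lemma bounding $|N_k[w]\cap V(P)|$; once that is in hand, the remainder is routine bookkeeping, and all three inequalities combine to give the stated sandwich.
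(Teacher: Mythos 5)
Your proof is correct in all three pieces: the two explicit broadcasts for the upper bounds are exactly what is needed, and the ball-counting argument for the lower bound is sound (the key observation that $|N_k[w]\cap V(P)|\leq 2k+1$ on a shortest path, followed by $d+1\leq 2\sigma(f)+|V_f^+|\leq 3\sigma(f)$, is a standard and valid derivation). However, the route you take for the lower bound differs from the one the paper uses. The paper cites Proposition \ref{prop:erwinBound} to Erwin without reproducing a proof, and instead proves the stronger statement in Proposition \ref{prop:mpDiam}: it constructs an explicit multipacking $M=\{v_i : i\equiv 0 \ (\mod 3)\}$ on a diametrical path, shows $|N_s[v]\cap M|\leq s$ for all $v$ and $s$, and concludes $\mlp(G)\geq \lceil(\diam(G)+1)/3\rceil$; the bound on $\gamma_b$ then follows from the duality inequality $\mlp(G)\leq\gamma_b(G)$. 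The trade-off is this: your argument is self-contained and needs no duality, but it bounds only $\gamma_b$; the paper's argument produces a dual certificate (a multipacking), so it simultaneously establishes the lower bound for $\mlp$, which is what drives the later results on the $\gamma_b/\mlp$ ratio (Corollary \ref{prop:ratio}). If you want your proof to plug into the rest of the paper, it is worth noting that your counting lemma is essentially the feasibility check for the paper's multipacking $M$ viewed from the other side of the LP.
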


\noindent Hartnell and Mynhardt \cite{HartMyn} have expanded this result to include multipackings.

\begin{prop}\emph{\cite{HartMyn}, \cite{LThesis}}\label{prop:mpDiam}
	For any connected graph $G$, 
	\begin{align*} \left\lceil \frac{\diam(G)+1}{3}\right\rceil \leq \mlp(G) \leq \gamma_b(G) \leq \min\{\rad(G), \gamma(G)\}. \end{align*}. 
\end{prop}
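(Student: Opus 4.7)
The chain $\mlp(G) \leq \gb(G) \leq \min\{\rad(G), \gamma(G)\}$ already follows from earlier results: the upper two bounds are exactly Proposition~\ref{prop:erwinBound}, and the middle inequality $\mlp(G) \leq \gb(G)$ was recorded at the end of Section~2.1 as a consequence of LP duality. So the only new content is the lower bound $\lceil (\diam(G)+1)/3 \rceil \leq \mlp(G)$, and it suffices to exhibit a single multipacking of that cardinality.

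My plan is to build such an $M$ from a diametrical path. Let $d = \diam(G)$, fix a shortest path $P : v_0 v_1 \cdots v_d$ whose endpoints realize the diameter, and set
\[ M = \{\, v_{3i} : 0 \leq i \leq \lfloor d/3 \rfloor \,\}, \]
so that $|M| = \lfloor d/3 \rfloor + 1 = \lceil (d+1)/3 \rceil$. It then remains to verify the multipacking condition for every $v \in V$ and every $s$ with $1 \leq s \leq d$.

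The key observation is that because $P$ is geodesic in $G$, graph distances between path vertices agree with their distances along $P$: $d_G(v_a, v_b) = |a - b|$ for all $0 \leq a, b \leq d$. Fix $v \in V$ and $s$ as above; if $v_{3i}, v_{3j} \in M \cap N_s[v]$, then by the triangle inequality
\[ 3|i - j| \;=\; d_G(v_{3i}, v_{3j}) \;\leq\; 2s, \]
so the indices of members of $M \cap N_s[v]$ lie in an integer interval of width at most $\lfloor 2s/3 \rfloor$, giving $|M \cap N_s[v]| \leq \lfloor 2s/3 \rfloor + 1$.

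To close the argument it remains to verify $\lfloor 2s/3 \rfloor + 1 \leq s$ for every $s \geq 1$: the cases $s = 1, 2$ are checked by hand (both yield equality), and for $s \geq 3$ the inequality follows from $\lfloor 2s/3 \rfloor \leq 2s/3 \leq s - 1$. I expect the main pitfall to be conceptual rather than computational: one must genuinely use that $P$ is a \emph{shortest} $v_0$--$v_d$ path, since that is what makes $d_G(v_a, v_b) = |a-b|$; conflating this with the weaker statement that $P$ is merely a path between two vertices at graph distance $d$ would be the easiest place to slip.
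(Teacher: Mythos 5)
Your proposal is correct and takes essentially the same approach as the paper: both build $M=\{v_{3i}\}$ on a diametrical path and exploit the fact that $P$ is geodesic to count the members of $M$ within distance $s$ of an arbitrary vertex, reducing to the check $\lfloor 2s/3\rfloor+1\leq s$. The only difference is organizational --- the paper first passes from a general vertex $v$ to the path vertex $v_r$ with $d(v,v_0)=r$ via $N_s[v]\cap M\subseteq N_s[v_r]\cap M$, whereas you bound $|M\cap N_s[v]|$ directly by applying the triangle inequality to pairs of vertices in $M\cap N_s[v]$.
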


\begin{proof}
Let $d = \diam(G)$ and $P = v_0, v_1,\dots, v_d$ be a diametrical path of $G$.  Define $V_i = \{v: d(v,v_0)=i\}$ and $M=\{v_i: i \equiv 0 (\mod 3), i=0,\dots,d \}$.  By our choice of $M$, any  $v_i \in V(P)$ satisfies $|N_s[v_i] \cap M| \leq s$ for all $s \geq 1$.  Consider now any $1 \leq r \leq d$ and any $v\in V_r$.  Since $v_r\in V_r$ is on $P$ and $M \subseteq V(P)$, $N_s[v] \cap M \subseteq N_s[v_r]\cap M$.  Thus $|N_s[v]\cap M| \leq s$ for all $s \geq 1$.  It follows that $\mlp(G) \geq |M| = \left\lceil \frac{\diam(G)+1}{3}\right\rceil$.
\end{proof}

By combining the results of Propositions \ref{prop:erwinBound}  and \ref{prop:mpDiam}, Hartnell and Mynhardt closed an open problem from \cite{BMT} which asked whether the $\gamma_b / \mlp$ ratio could be arbitrary.

\begin{coro}\emph{\cite{HartMyn}}\label{prop:ratio}
For any graph $G$, 
	\begin{align*}
		\gamma_b(G)  /  \mlp(G) < 3.
	\end{align*}
\end{coro}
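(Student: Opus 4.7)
The plan is to combine the two chains of bounds given in Proposition \ref{prop:mpDiam}. Specifically, I would use the lower bound $\mlp(G) \geq \left\lceil (\diam(G)+1)/3\right\rceil$ together with the upper bound $\gamma_b(G) \leq \rad(G)$.

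First I would write
\begin{align*}
\frac{\gamma_b(G)}{\mlp(G)} \leq \frac{\rad(G)}{\left\lceil (\diam(G)+1)/3\right\rceil} \leq \frac{3\,\rad(G)}{\diam(G)+1},
\end{align*}
using the ceiling inequality $\lceil x \rceil \geq x$ in the denominator. Then I would invoke the elementary fact $\rad(G) \leq \diam(G)$, which gives $\rad(G) < \diam(G)+1$, so that the final fraction is strictly less than $3$. Chaining these inequalities together yields $\gamma_b(G)/\mlp(G) < 3$, as required.

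There is no real obstacle here; the content is entirely contained in the preceding Proposition \ref{prop:mpDiam}, and the corollary is essentially a bookkeeping exercise: the numerator is controlled by $\rad(G)$, the denominator by roughly $\diam(G)/3$, and the ratio $\rad/\diam \leq 1$ forces the gap. The only subtlety worth mentioning is that the strict inequality (as opposed to $\leq 3$) comes precisely from the ``$+1$'' in the numerator of $\lceil (\diam(G)+1)/3 \rceil$, which ensures $\rad(G)/(\diam(G)+1) < 1$ even when $\rad(G) = \diam(G)$ (as happens, e.g., for complete graphs).
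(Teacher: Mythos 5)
Your proof is correct and follows essentially the same route as the paper: both arguments rest on combining the lower bound $\mlp(G) \geq \lceil(\diam(G)+1)/3\rceil$ from Proposition \ref{prop:mpDiam} with $\gamma_b(G) \leq \rad(G) \leq \diam(G) < \diam(G)+1$. The paper phrases it as the chain $3\mlp(G) \geq \diam(G)+1 > \rad(G) \geq \gamma_b(G)$ (and notes the integer consequence $\gamma_b(G) \leq 3\mlp(G)-1$), whereas you arrange the same facts as a ratio, but the substance is identical.
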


\begin{proof}
 Since $3 \mlp(G) \geq \diam(G)+1 > \rad(G) \geq \gamma_b(G)$, 
 \begin{align*}
 \gamma_b(G) \leq 3 \mlp(G) - 1,
 \end{align*}
 and so
 \begin{align*}
 	\gamma_b(G)  /  \mlp(G) < 3.
 \end{align*}
\end{proof}

\noindent Hartnell and Mynhardt also offer the following result, which is the only known upper bound for $\gb$ in terms of $\mlp$.

\begin{prop}\emph{\cite{HartMyn}}\label{prop:mpB}
If $G$ is a graph with $\mlp(G) \geq 2$, then $\gamma_b(G) \leq 3 \mlp(G)-2$.   Furthermore, equality is reached for some graphs $G$ with $\mlp(G)=2$.
\end{prop}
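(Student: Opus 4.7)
The plan is to leverage Proposition~\ref{prop:mpDiam} directly, which already gives the chain $\gamma_b(G)\le\rad(G)\le\diam(G)\le 3\mlp(G)-1$. Setting $m=\mlp(G)\ge 2$, the desired refinement $\gamma_b(G)\le 3m-2$ is automatic as soon as any single inequality in this chain is strict: if $\rad(G)\le 3m-2$, then placing a broadcast $f(c)=\rad(G)$ at a center $c$ is a dominating broadcast of cost at most $3m-2$. Hence the entire argument reduces to the extremal case $\gamma_b(G)=\rad(G)=\diam(G)=3m-1$, in which $G$ is self-centered of diameter exactly $3m-1$.

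For the extremal case, I would fix a diametrical path $P=v_0,v_1,\dots,v_{3m-1}$ and try to construct a two-vertex dominating broadcast of total cost $3m-2$. A natural attempt is to place broadcasts of strengths $a$ and $b$ with $a+b=3m-2$ at a pair of vertices chosen on or near the endpoints of $P$: since a broadcast at $v_0$ of strength $a$ together with one at $v_{3m-1}$ of strength $b$ already covers all of $V(P)$ whenever $a+b\ge 3m-2$, the issue is purely off-path coverage. Self-centeredness provides the constraints $d(u,v_0)\le 3m-1$ and $d(u,v_{3m-1})\le 3m-1$ for every $u\in V(G)$, together with the triangle bound $d(u,v_0)+d(u,v_{3m-1})\ge 3m-1$, and the goal is to show that these prevent any vertex from being simultaneously outside $N_a[v_0]$ and $N_b[v_{3m-1}]$. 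This off-path coverage is exactly where I expect the main technical obstacle to lie, since the triangle inequality alone does not rule out vertices lying ``in the middle'' at distance greater than $a$ from one endpoint and greater than $b$ from the other; overcoming it will likely require either a subtler choice of broadcast centers (for instance $v_1$ and $v_{3m-2}$, or a vertex chosen by a local eccentricity argument) or an enlargement of the multipacking to derive a contradiction with $\mlp(G)=m$.

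For the tightness claim, I would exhibit a concrete graph $G_0$ with $\mlp(G_0)=2$ and $\gamma_b(G_0)=4$. Any such $G_0$ must satisfy $\diam(G_0)\le 5$ (from $\mlp(G_0)=2$) and $\rad(G_0)\ge 4$ (from $\gamma_b(G_0)=4\le\rad(G_0)$), so the candidate space is rather restricted: in particular $G_0$ cannot be a tree, since trees satisfy $\rad=\lceil\diam/2\rceil$, nor an ordinary cycle, since $\gamma_b(C_n)=\lceil n/3\rceil$ is too small. A reasonable candidate class is graphs built from a short cycle with carefully placed pendant and bridging structure, tuned so that every three-element subset $M\subseteq V(G_0)$ violates $|M\cap N_s[v]|\le s$ for some $v$ and some $s$, while every broadcast of total cost $3$ fails to dominate some vertex. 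Once the candidate is fixed, both $\mlp(G_0)\le 2$ and $\gamma_b(G_0)\ge 4$ reduce to finite case-checks over the (small) vertex set.
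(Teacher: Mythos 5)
Your reduction is sound as far as it goes: Propositions \ref{prop:erwinBound} and \ref{prop:mpDiam} give $\gamma_b(G)\le\rad(G)\le\diam(G)\le 3m-1$ for $m=\mlp(G)$, so the only case requiring work is $\rad(G)=\diam(G)=3m-1$, where $G$ is self-centered. But that reduction is the easy part of the proposition, and your argument stops exactly where the real content begins. You correctly note that for a vertex $u$ off the diametrical path, non-coverage by broadcasts of strengths $a$ at $v_0$ and $b$ at $v_{3m-1}$ with $a+b=3m-2$ forces $d(u,v_0)+d(u,v_{3m-1})\ge 3m$, while the triangle inequality only guarantees $\ge 3m-1$, so such a $u$ is not excluded; you then say that overcoming this ``will likely require either a subtler choice of broadcast centers or an enlargement of the multipacking to derive a contradiction.'' That sentence is the whole theorem: without actually exhibiting the centers that work, or the $(m+1)$-element multipacking that contradicts $\mlp(G)=m$ when the naive broadcast fails, the bound $\gamma_b(G)\le 3m-2$ has not been established. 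As written, the proposal proves only $\gamma_b(G)\le 3m-1$, which is already Corollary \ref{prop:ratio}.

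The tightness claim is likewise unfinished: you list necessary features of a witness ($\diam\le 5$, $\rad\ge 4$, not a tree, not a cycle) and a ``candidate class,'' but never name a graph or carry out the finite check. Note that the paper already supplies one: the graph of Figure \ref{fig:ceMP} has $\gamma_b=4$ and $\mlp=2$, hence attains $\gamma_b=3\mlp-2$. (The paper states Proposition \ref{prop:mpB} without proof, citing Hartnell and Mynhardt, so there is no in-paper argument to compare against; judged on its own terms, your writeup is a plan in which the two decisive steps are acknowledged but not carried out.)
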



Naturally, the study of bounds has journeyed into investigations of equally.  In particular, for which graphs is $\gamma_b=\mlp$?  Trivially, $\gamma_b(P_n) = \mlp(P_n)$, where $P_n$ is the path on $n$ vertices.  A similar result follows for cycles.

\begin{prop}\emph{\cite{LThesis}}\label{prop:mpCycle}
For any cycle $C_n$ with $n \geq 3$, $\mlp(C_n) = \gamma_b(C_n)$ if and only if $n \equiv 0 \ (\mod 3)$.
\end{prop}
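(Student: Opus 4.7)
The plan is to prove, for every $n \geq 3$, the two side-bounds $\gamma_b(C_n) = \lceil n/3 \rceil$ and $\mlp(C_n) \leq \lfloor n/3 \rfloor$, and then to exhibit an explicit multipacking of size $n/3$ when $3 \mid n$. Since $\lfloor n/3 \rfloor < \lceil n/3 \rceil$ precisely when $3 \nmid n$, these side-bounds deliver the biconditional immediately.

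For $\gamma_b(C_n) = \lceil n/3 \rceil$, the upper bound $\gamma_b(C_n) \leq \gamma(C_n) = \lceil n/3 \rceil$ is immediate from Proposition~\ref{prop:erwinBound} and the classical domination number of a cycle. For the matching lower bound, fix a dominating broadcast $f$. Because $|N_k[v]| \leq 2k+1$ in a cycle and the broadcast balls cover $V(C_n)$, a union bound yields
\begin{equation*}
n \leq \sum_{u \in V_f^+} (2f(u) + 1) = 2\sigma(f) + |V_f^+| \leq 3\sigma(f),
\end{equation*}
using $f(u) \geq 1$ on $V_f^+$ in the last step. Integrality of $\sigma(f)$ then forces $\gamma_b(C_n) \geq \lceil n/3 \rceil$.

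For $\mlp(C_n) \leq \lfloor n/3 \rfloor$, I would use that the $s = 1$ constraint forces $M$ to be a 2-packing, so any two distinct elements of $M$ have cycle-distance at least $3$ (otherwise they lie in a common closed neighbourhood). Listing $M$ cyclically around $C_n$, the gaps between consecutive elements are integers at least $3$ summing to $n$, so $|M| \leq \lfloor n/3 \rfloor$.

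Finally, for $n = 3k$ I would take $M = \{v_0, v_3, \ldots, v_{3(k-1)}\}$ and verify the multipacking property. Any ball $N_s[v]$ is an arc of $\min(2s+1, n)$ consecutive vertices: when $2s + 1 \leq n$, periodicity gives $|M \cap N_s[v]| \leq \lceil (2s+1)/3 \rceil \leq s$ for every $s \geq 1$; when $2s+1 \geq n$, $N_s[v] = V(C_n)$ so $|M \cap N_s[v]| = k$, which is at most $s$ since $s \geq \lceil (n-1)/2 \rceil \geq k$ for $k \geq 1$. Hence $\mlp(C_{3k}) \geq k$, and together with the earlier bounds this gives $\mlp(C_{3k}) = \gamma_b(C_{3k}) = k$; while for $3 \nmid n$, $\mlp(C_n) \leq \lfloor n/3 \rfloor < \lceil n/3 \rceil = \gamma_b(C_n)$. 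The main work is the uniform verification that the periodic $M$ is a multipacking for every $s$ up to $\diam(C_n)$, which reduces to the elementary inequalities $\lceil (2s+1)/3 \rceil \leq s$ in the interior regime and $k \leq s$ at the diameter endpoint.
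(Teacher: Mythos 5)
Your proof is correct and complete; note that the paper states Proposition~\ref{prop:mpCycle} without any proof (deferring to \cite{LThesis}), and the standard argument there is essentially the one you give: establish $\gamma_b(C_n)=\lceil n/3\rceil$ via the covering bound $n\le 2\sigma(f)+|V_f^+|\le 3\sigma(f)$, establish $\mlp(C_n)\le\lfloor n/3\rfloor$ from the $s=1$ (2-packing) constraint forcing cyclic gaps of at least $3$, and exhibit the periodic multipacking of size $n/3$ when $3\mid n$. All three steps check out, including the arc-counting inequality $\lceil(2s+1)/3\rceil\le s$ for $s\ge 1$ and the boundary case $N_s[v]=V(C_n)$ where $k\le s$.
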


\noindent A famous result in domination is the equality of $\gamma(T)$ and $\rho(T)$ for any tree, as shown by Meir and Moon in \cite{MM}.  In 2013, Mynhardt and Teshima extended this result for broadcasts and multipackings.
\begin{theorem}
\label{th:mpTree} \emph{\cite{MT}}  For any tree $T$, $\gamma_{b}(T)=\mlp(T)$.
\end{theorem}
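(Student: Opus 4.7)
The inequality $\mlp(T) \le \gamma_b(T)$ is already given by Proposition \ref{prop:mpDiam} (and holds for every graph via LP duality). The plan is to establish the reverse inequality by exploiting the structure of the LP relaxations of the broadcast IP. Let $A$ be the extended neighbourhood matrix of $T$, and let $\tau^*(T)$ denote the common optimum of the LP relaxations of $B$-$PIP(T)$ and $MP$-$DIP(T)$ obtained by dropping the $\{0,1\}$ constraints, so that by weak and strong LP duality $\mlp(T) \le \tau^*(T) \le \gamma_b(T)$. To collapse these bounds, I would show that both relaxations admit integral optima; a clean sufficient condition is that $A$ is \emph{balanced}, that is, contains no submatrix equal to the vertex--edge incidence matrix of an odd cycle, since balanced $0/1$ constraint matrices yield integral optima for set covering and packing LPs with integer cost vectors.

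The crux is therefore to prove that the extended neighbourhood matrix of a tree is balanced. I would argue by contradiction: suppose there is a $(2k+1)\times(2k+1)$ submatrix of $A$ which is the incidence matrix of $C_{2k+1}$, with rows indexed by distinct vertices $w_0,\dots,w_{2k}$ and columns indexed by balls $B_j = N_{r_j}[u_j]$, arranged so that $w_i \in B_j$ if and only if $j \equiv i$ or $j \equiv i+1 \pmod{2k+1}$. Each $B_j$ is a convex subtree of $T$, and the cyclic incidence pattern forces nonempty pairwise intersections $B_j \cap B_{j+1}$ around the cycle. Using the Helly property of subtrees of a tree---any pairwise intersecting family of subtrees shares a common vertex---together with the distinctness of the $w_i$, I would show that the cyclic chain of balls must collapse onto a single path in $T$, so that some $w_i$ is forced to belong to three or more of the chosen balls, contradicting the prescribed $C_{2k+1}$ pattern.

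Once balancedness is established the theorem follows immediately: the primal LP has an integral optimum, which is a dominating broadcast of cost $\tau^*(T)$, and the dual LP has an integral optimum, which is a multipacking of size $\tau^*(T)$, so $\gamma_b(T) = \tau^*(T) = \mlp(T)$. The main obstacle is the balancedness step itself. While the Helly property of subtrees is a standard tool, translating the combinatorial pattern ``each row is in exactly two columns and each column contains exactly two rows, arranged as an odd cycle'' into a configuration of subtrees that can actually be excluded requires careful bookkeeping of which $B_j$'s each $w_i$ belongs to and how the balls overlap along their common subpaths. A viable alternative in the spirit of Farber's algorithm would be to peel leaves of $T$ in a BFS order, greedily constructing a broadcast and a matching multipacking in tandem; but the balanced-matrix route is more conceptually direct and dovetails naturally with the IP formulation already laid out in Section~\ref{sec:broadcast}.
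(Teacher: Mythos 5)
Your high-level strategy --- reduce everything to LP duality and show that the extended neighbourhood matrix $A$ of a tree is structured enough to force integral optima on both sides --- is the same skeleton as the proof the paper presents via Brewster and Duchesne in Section \ref{sec:farber}, and your duality bookkeeping at the beginning and end is fine. The difficulty is that the entire content of the theorem sits in the one step you leave as a sketch, and the mechanism you propose for that step does not work. The Helly property of subtrees cannot exclude odd-cycle incidence submatrices: in $K_{1,3}$ with centre $c$ and leaves $a,b,d$, the three subtrees $\{a,c,b\}$, $\{b,c,d\}$, $\{d,c,a\}$ pairwise intersect, all contain the common vertex $c$, and nevertheless their incidence pattern on the rows $a,b,d$ is exactly the vertex--edge incidence matrix of $C_3$. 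So ``the columns are convex subtrees and pairwise-intersecting subtrees meet'' is simply not enough to rule out the forbidden configuration; any successful argument must exploit the fact that the columns of $A$ are metric balls $N_k[v]$, determined by a centre and a radius. That is precisely where the paper's work lies: Proposition \ref{prop:GamFreeT} roots $T$ at a central vertex, orders rows by decreasing depth and columns lexicographically, and then runs a careful least-common-ancestor and distance comparison to show the ordered matrix has no $\Gamma$-submatrix. ($\Gamma$-freeness under an ordering gives \emph{total} balancedness, which is strictly stronger than the balancedness you are after; your target statement is true, but the route you sketch toward it would fail without an analysis of this kind.)

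There is also a difference in how the integrality is harvested, worth noting even though it is not where the gap is. The paper does not invoke polyhedral integrality theorems for balanced matrices; it runs Farber's primal--dual greedy algorithm (Algorithm \ref{alg:BDfarb}) on the $\Gamma$-free ordering, which explicitly constructs a $0/1$ dominating broadcast and a $0/1$ multipacking satisfying complementary slackness, hence of equal cost --- a constructive certificate in one pass. Your polyhedral route would also close the argument once balancedness were established, provided you add the small checks that an integral point of the dual polyhedron is a genuine multipacking (it is, since the radius-one constraints force $y_u\leq 1$) and that an integral optimal cover can be taken $0/1$ (any entry exceeding $1$ can be lowered without losing feasibility). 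But as written, the proposal establishes neither the balancedness it needs nor, therefore, the theorem.
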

\noindent The original proof for Theorem \ref{th:mpTree} provided in \cite{MT} is quite long and technical; however, it does provide a useful algorithm for finding a maximum multipacking of tree, which we present with example in Appendix \ref{ap:mpTree}.  Instead, we show Brewster and Duchesne's alternative proof using Farber's algorithm in Section \ref{sec:farber}.   

Exploration has also ventured into examination of the $\gamma_b-\mlp$ gap.  To start, we present the following proposition which provides a trivial condition for inequality between $\mlp$ and $\gb$.
\begin{prop}\emph{\cite{LThesis}} \label{prop:effMp}
If $G$ has $\gamma(G) = \gamma_b(G)$ and does not have an efficient $\gamma$-set, then $\gb-\mlp \geq 1$.  
\end{prop}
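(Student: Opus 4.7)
The plan is proof by contradiction: assume $\mlp(G) = \gamma_b(G)$ and deduce the existence of an efficient $\gamma$-set, contradicting the hypothesis.

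First, combining $\mlp(G) = \gamma_b(G)$ with the hypothesis $\gamma(G) = \gamma_b(G)$ yields $\mlp(G) = \gamma(G)$. Let $M$ be a maximum multipacking, so $|M| = \gamma(G)$. The $s = 1$ case of the multipacking definition gives $|M \cap N[v]| \leq 1$ for every $v \in V$, so $M$ is a 2-packing. Combined with the standard chain $\mlp(G) \leq \rho(G) \leq \gamma(G)$, this forces $\rho(G) = \gamma(G) = |M|$, and $M$ is a maximum 2-packing.

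At this point one can invoke the classical characterization that $\rho(G) = \gamma(G)$ if and only if $G$ has an efficient dominating set. Such a set necessarily has cardinality $\gamma(G)$ and is therefore an efficient $\gamma$-set, contradicting the hypothesis and completing the proof.

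For a self-contained argument, the plan would instead be to show that $M$ is a dominating set---this would make $M$ itself an efficient $\gamma$-set, since its closed neighborhoods are already pairwise disjoint by the 2-packing property. Suppose for contradiction that some $v \in V$ is not dominated by $M$, and fix a $\gamma$-set $D$. A counting argument (each $N[m]$ contains at least one $d \in D$ by domination, these $d$'s are distinct since $M$ is a 2-packing, and $|D| = |M|$) yields a bijection $\phi : M \to D$ with $\phi(m) \in N[m]$. The $d_0 \in D$ that dominates $v$ then corresponds to an $m_0 \in M$ with $d(v, m_0) \leq 2$, and since $v \notin N[m_0]$ we get $d(v, m_0) = 2$, producing a length-2 path $v \sim u \sim m_0$. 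From this local configuration one would attempt to build a multipacking of size $|M| + 1$ by swapping $m_0$ out in favor of $v$ (possibly together with a second carefully chosen vertex) and thus contradict the maximality of $M$. The main obstacle is executing this augmentation: while the $s = 1$ constraints are manageable, verifying $|M' \cap N_s[w]| \leq s$ at every scale $s$ and every vertex $w$ requires choices that may depend on the global structure of $G$. Invoking the classical $\rho(G) = \gamma(G)$ characterization sidesteps this technicality entirely.
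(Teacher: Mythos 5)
The paper itself gives no proof of Proposition \ref{prop:effMp} (it is quoted from \cite{LThesis}), so your argument has to stand on its own, and it breaks at the central step. The ``classical characterization'' you invoke --- that $\rho(G)=\gamma(G)$ if and only if $G$ has an efficient dominating set --- is not a theorem: only the ``if'' direction holds. For the converse, take the bull graph $B$, a triangle $a,b,c$ with pendant vertices $a'$ at $a$ and $b'$ at $b$. Since $d(a',b')=3$, the set $\{a',b'\}$ is a $2$-packing, and no single vertex dominates $B$, so $\rho(B)=\gamma(B)=2$; yet the only $2$-element $2$-packing fails to dominate $c$, and each of the three dominating sets of size two ($\{a,b\}$, $\{a,b'\}$, $\{a',b\}$) over-dominates some vertex, so $B$ has no efficient dominating set. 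Hence the step ``$\rho=\gamma$ forces an efficient $\gamma$-set'' is false, and your fallback does not rescue it: a maximum multipacking $M$ with $|M|=\gamma(G)$ need not dominate (again $\{a',b'\}$ in $B$), and the augmentation to a multipacking of size $|M|+1$ that you sketch cannot be carried out in general --- indeed in $B$ no larger multipacking exists.

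The same example is worth flagging for its own sake: under the paper's literal definitions one checks that $\gamma(B)=\gamma_b(B)=2$, that $B$ has no efficient $\gamma$-set, and that $\{a',b'\}$ is already a multipacking of $B$ (the $s=1$ constraint holds because $d(a',b')=3$, and the constraints for $s\geq 2$ are vacuous when $|M|=2$), so $\mlp(B)=\gamma_b(B)=2$. So either ``efficient $\gamma$-set'' is meant in a sense other than a minimum perfect code, or the statement carries hypotheses not visible here; in either case a proof cannot proceed by reducing the proposition to the equivalence $\rho=\gamma\Leftrightarrow{}$existence of an efficient dominating set, which is what your proposal does. To repair the argument you would need to identify precisely which structural consequence of $\mlp=\gamma_b=\gamma$ is actually being used in \cite{LThesis}, rather than appealing to this (false) characterization.
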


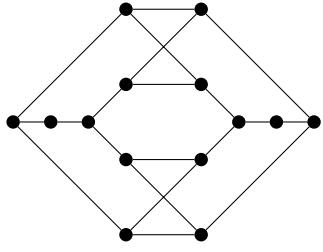
\begin{figure}[H]
			\centering
				\begin{tikzpicture}								
				\node[std](e1) at (0,1.5) {};
				\node[std](e2) at (4,1.5) {};				
				\node[std](m1) at (0.5,1.5) {};
				\node[std](m2) at (3.5,1.5) {};				
				\node[std](h1) at (1,1.5){};
				\node[std](h2) at (3,1.5){};				
				\node[std](x1) at (1.5,3){};
				\node[std](x2) at (2.5,3){};
				\node[std](x3) at (1.5,2){};
				\node[std](x4) at (2.5,2){};				
				\node[std](y1) at (1.5,1){};
				\node[std](y2) at (2.5,1){};
				\node[std](y3) at (1.5,0){};
				\node[std](y4) at (2.5,0){};
				\draw(e1)--(m1)--(h1)--(x3);
				\draw(e2)--(m2)--(h2)--(x4);
				\draw (h1)--(y1);
				\draw(h2)--(y2);
				\draw(x1)--(x2)--(x3)--(x4)--(x1);
				\draw(y1)--(y2)--(y3)--(y4)--(y1);
				\draw(e1)--(x1);
				\draw(e2)--(x2);
				\draw(e1)--(y3);
				\draw(e2)--(y4);
			\end{tikzpicture}
				\caption{A graph $G$ with $\gamma_b(G) = 4$, $\mlp(G) = 2$.}%
	\label{fig:ceMP}
\end{figure}

\noindent The graph in Figure \ref{fig:ceMP} (see \cite{LThesis}) was the first known example where $\gamma_b - \mlp > 1$.  The following year, Hartnell and Mynhardt \cite{HartMyn} provided a construction for a graph $G_k$ with $\gamma_b(G_k) - \mlp(G_k) = k$ for any $k \geq 1$, thereby demonstrating that the $\gamma_b - \mlp$ difference can be arbitrary.

\begin{figure}[H]
			\centering
				\begin{tikzpicture}
				\foreach \i/\x in {1/2.5, 2/7.5, 3/12.5}
							\node [std](a\i) at (\x,0) {};	
				\foreach \i/\x in {1/1, 2/2, 3/3, 4/4, 5/6, 6/7, 7/8, 8/9, 9/11, 10/12, 11/13, 12/14 }
							\node [std](b\i) at (\x,1) {};								
				\foreach \i/\x in {1/2.5, 2/7.5, 3/12.5}
							\node [std](c\i) at (\x,2) {};								
			
			\draw(c1)--(b1)--(a1)--(b2)--(c1)--(b3)--(a1)--(b4)--(c1);	
			\draw(c2)--(b5)--(a2)--(b6)--(c2)--(b7)--(a2)--(b8)--(c2);	
			\draw(c3)--(b9)--(a3)--(b10)--(c3)--(b11)--(a3)--(b12)--(c3);	
			\draw(b4)--(b5);
			\draw(b8)--(b9);			
		
				\node[above=1mm of c1][color=black]{$c_1$}; 
				\node[above=1mm of b1][color=black]{$a_1$};
				\node[above=1mm of b4][color=black]{$b_1$}; 
				\node[below=1mm of a1][color=black]{$u_1$};		
				\node  at (1.7,1) {$r_1$};	
				\node  at (3.3,1) {$s_1$};						
				
				\node[above=1mm of c2][color=black]{$c_2$}; 
				\node[above=1mm of b5][color=black]{$a_2$};
				\node[above=1mm of b8][color=black]{$b_2$}; 
				\node[below=1mm of a2][color=black]{$u_2$};		
				\node  at (6.7,1) {$r_2$};	
				\node  at (8.3,1) {$s_2$};					
		
				\node[above=1mm of c3][color=black]{$c_3$}; 
				\node[above=1mm of b9][color=black]{$a_3$};
				\node[above=1mm of b12][color=black]{$b_3$}; 
				\node[below=1mm of a3][color=black]{$u_3$};		
				\node  at (11.7,1) {$r_3$};	
				\node  at (13.3,1) {$s_3$};	
				\node[bred] at (c2) {};
				\node[mp] at (a1) {};
				\node[mp] at (a2) {};
				\node[mp] at (a3) {};
				
				\node[right=1mm of c2][color=red]{$4$};
			\end{tikzpicture}	
			\caption{The graph $G_1$ with $\gamma_b(G_1)=4$ and $\mlp(G_1)=3$}
	\label{fig:HartMynMPG1}
\end{figure}
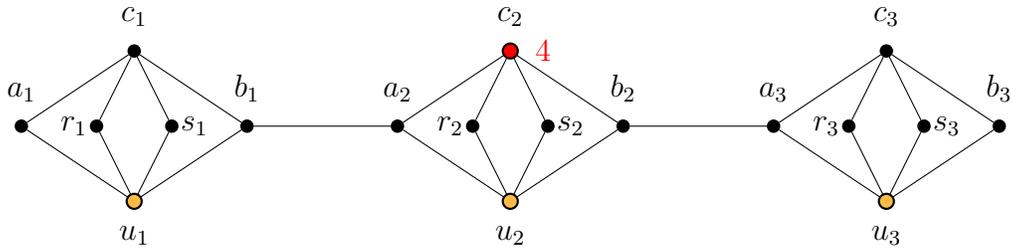

\bigskip

\noindent\textbf{Construction of $G_k$}  \cite{HartMyn}: For $i=1,2,\dots,3k$, let $H_i \cong K_{2,4}$ with bipartition $(X_i,Y_i)$, where $X_i=\{c_i, u_i\}$ and $Y_i=\{a_i,r_i,s_i,b_i\}$.  Form $G_k$ by joining $b_i$ and $a_{i+1}$ for each $i=1,2,\dots, 3k-1$.  

\bigskip

The graph $G_1$ is illustrated in Figure \ref{fig:HartMynMPG1}.  Notice that each induced subgraph $H_i$ contains at most one multipacking vertex, and so $\mlp(G_1) \leq 3$.  Furthermore, recall from Proposition \ref{prop:mpDiam} that 
\begin{align*}
\mlp(G_1) \geq \left\lceil \frac{\diam(G_1)+1}{3}\right\rceil = \left\lceil \frac{8+1}{3}\right\rceil = 3.
\end{align*}
Thus the yellow vertex set (when viewed in colour)  $U=\{u_1,u_2,u_3\}$ forms a maximum multipacking on  $G_1$.  The generalized graph $G_k$ is illustrated in Figure \ref{fig:HartMynMPb}.  The depicted broadcast $f$ with
\begin{align*}
	f(v) &= \begin{cases}
		4 & \text{ if } v=c_i \text{ and } i \equiv 2 \ (\mod 3)\\
		0 & \text{ otherwise,}
	\end{cases}
\end{align*}
is clearly a dominating broadcast; we will show that $f$ is a minimum cost broadcast using Brewster and Duschesne's application of fractional multipackings in Section \ref{sec:fractional}.

\begin{figure}
			\centering
				\begin{tikzpicture}
				\foreach \i/\x in {1/0.75, 2/3.25, 3/5.75}
							\node [bblue](a\i) at (\x,0) {};	
				\foreach \i/\x in {1/0, 2/0.5, 3/1, 4/1.5, 5/2.5, 6/3, 7/3.5, 8/4, 9/5, 10/5.5, 11/6, 12/6.5 }
							\node [bblue](b\i) at (\x,1) {};								
				\foreach \i/\x in {1/0.75, 2/3.25, 3/5.75}
							\node [bblue](c\i) at (\x,2) {};								
			
			\draw(c1)--(b1)--(a1)--(b2)--(c1)--(b3)--(a1)--(b4)--(c1);	
			\draw(c2)--(b5)--(a2)--(b6)--(c2)--(b7)--(a2)--(b8)--(c2);	
			\draw(c3)--(b9)--(a3)--(b10)--(c3)--(b11)--(a3)--(b12)--(c3);	
			\draw(b4)--(b5);
			\draw(b8)--(b9);			
				\foreach \i/\x in {1/8.75, 2/11.25, 3/13.75}
							\node [smred](d\i) at (\x,0) {};	
				\foreach \i/\x in {1/8, 2/8.5, 3/9, 4/9.5, 5/10.5, 6/11, 7/11.5, 8/12, 9/13, 10/13.5, 11/14, 12/14.5 }
							\node [smred](e\i) at (\x,1) {};	
				\foreach \i/\x in {1/8.75, 2/11.25, 3/13.75}
							\node [smred](f\i) at (\x,2) {};	
			\draw(f1)--(e1)--(d1)--(e2)--(f1)--(e3)--(d1)--(e4)--(f1);	
			\draw(f2)--(e5)--(d2)--(e6)--(f2)--(e7)--(d2)--(e8)--(f2);	
			\draw(f3)--(e9)--(d3)--(e10)--(f3)--(e11)--(d3)--(e12)--(f3);	
			\draw(e4)--(e5);
			\draw(e8)--(e9);			
			
			\node (h1) at (7,1) {};	
			\node (h2) at (7.5,1) {};	
			
			\draw(b12)--(h1);
			\draw(e1)--(h2);
			\draw[tr] (b12)--(e1);
				\node[right=1mm of f2][color=red]{$4$}; 
				\node[right=1mm of c2][color=blue]{$4$};
				 
				 \node[above=1mm of b1][color=black]{$a_1$};			
				\node[above=1mm of b4][color=black]{$b_1$}; 
				 
				\node[above=1mm of b9][color=black]{$a_3$};			
				\node[above=1mm of b12][color=black]{$b_3$}; 
				
				\node[above=1mm of e9][color=black]{$a_{3k}$};			
				\node[above=1mm of e12][color=black]{$b_{3k}$}; 

				\node[above  left=1mm and 1mm of e2][color=black]{$a_{3k-2}$};			
				\node[above right=1mm and 1mm of e3][color=black]{$b_{3k-2}$};

			\end{tikzpicture}	
				\caption{The graph $G_k$ with $\gamma_b(G_k)=4k$ and $\mlp(G_k)=3k$}
	\label{fig:HartMynMPb}
\end{figure}
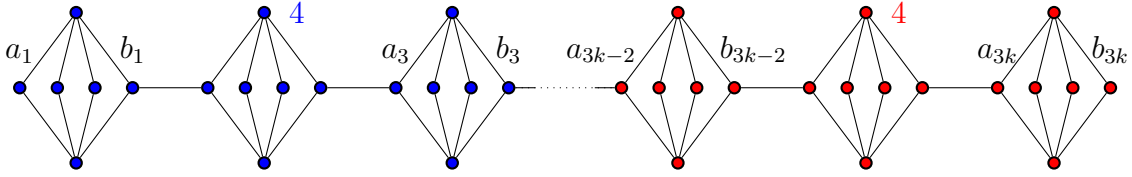

The graph $G$ in Figure \ref{fig:ceMP} has ratio $\gamma_b(G) /  \mlp(G) = 2$, while the graph $G_k$ has \linebreak $\gamma_b(G_k) /  \mlp(G_k) = 4 / 3$.  There are currently no known graphs with ratio $\gamma_b /  \mlp > 2$.  Hartnell and Mynhardt note that if a graph $H$ has $\mlp(H)=1$ or $2$, then $\gamma_b(H)  /  \mlp(H) \leq 2$.  Thus, if $G$ has $\gamma_b(G) /  \mlp(G) > 2$, then $\mlp(G) \geq 3$.  By Proposition \ref{prop:mpB},  if $G$ has $\mlp(G)=3$, then $\gamma_b(G) \leq 7$.  It follows that if such a graph $G$ with $\mlp(G)=3$ exists, then $G$ has $\gamma_b(G)=7$.  Hartnell and Mynhardt were unable to construct such an extremal graph; however, to aid in future investigation they formulated a series of seven structural facts.

\noindent \textbf{Facts }\cite{HartMyn}: Suppose that $G$ is a connected graph with $\mlp(G)=3$ and $\gamma_b(G)=7$.
Let $\alpha$ be a peripheral vertex of $G$.  For $i=0,1,\dots,8$, let $V_i=\{x \in V(G): d(x,\alpha)=i\}$.
\begin{enumerate}
	\item[$(i)$] For all vertices $u,v \in V_3 \cup V_4$, $d(u,v) \leq 4$.
	\item[$(ii)$] For any $u \in V_3\cup V_4$, any $y\in V_5\cup \dots \cup V_8$ and any $w \in V_7 \cup V_8$, $d(u,y) \leq 4$ or $d(y,w) \leq 2$.
	\item[$(iii)$] If $\diam(G)=8$, then for any $u\in V_3\cup V_4, V_5 \subseteq N_4[u]$.
	\item[$(iv)$] For each $u\in V_3 \cup V_4$, there exists $v\in V_4$ such that $d(u,v) \geq 3$.  Furthermore, if $\diam(G)=7$, there is such a $v$ that $d(u,v)=4$.
	\item[$(v)$] Consider any $u,v \in V_3 \cup V_4$ such that $3 \leq d(u,v) \leq 4$.
		\begin{enumerate}
			\item[$(a)$] If $\diam(G)=8$ and $\{u,v\} \cap V_3 \neq \varnothing$, then there exists a $u-v$ path of length at most four that contains a vertex in $V_2$.
			\item[$(b)$] If $\diam(G)=8$ and $\{u,v\} \subseteq V_4$, then there exists a $u-v$ path of length at most four that contains a vertex in $V_2$, or such a path that contains a vertex in $V_6$.
			\item[$(c)$] If $\diam(G)=7$ then there exist a $u-v$ path of length at most four that contains a vertex in $V_2$, or such a path that contains a vertex $V_5$. 
		\end{enumerate}
	\item[$(vi)$] For each $u \in V_3$ there exists $v \in V_2$ such that $d(u,v)=5$.  There also exists a path from $v$  to $V_3$ that does not contain $u$.
	\item[$(vii)$] For each $u \in V_6 \cup V_7$ and each $w \in V_7$, $d(u,w) \leq 4$.  Moreover, if $\diam(G)=8$, then for each $u\in V_6 \cup V_7 \cup V_8$ and each $w \in V_8$, $d(u,w) \leq 2$.
\end{enumerate}


\section{Fractional Broadcasts and Multipackings}
\label{sec:fractional}

Fractional relaxations are a natural extension of the broadcast and multipacking IP's.  The primal problem, the \emph{fractional broadcast primal linear program} B-PLP, finds the minimum cost \emph{fractional broadcast} of a graph $G$ with \emph{fractional broadcast number} $\gamma_{b,f}(G)$.
Now the fractional value of $x_{k,v}$ can be viewed as the intensity or perhaps quality of the signal.  For example $x_{k,v} = 1 \slash 2$ represents that half of a full signal is broadcast from vertex $v$ to all vertices at distance at most $k$ away.  For a vertex $v$ to be dominated by a fractional broadcast, the sum of the signal intensities heard by $v$ must sum to at least one.    
  
\hspace{5mm}\textbf{The Fractional  Broadcast PLP: $B-PLP(G):$} 
\begin{align*}
\min & \ c \cdot x \\
\text{s.t.}&\ Ax \geq \vec{1}\\
 &x_{k,v}\geq 0.
\end{align*}%

For the \emph{fractional multipacking dual linear program} MP-PLP, we view fractional multipackings as a weighting of the vertices rather a vertex subset.  For a graph $G=(V,E)$,  let $y_i$ be the multipacking weight of vertex $v_i$ and define $y$ to be the row vector with entires $y_i$. 

\hspace{5mm}\textbf{The Fractional Multipacking DLP: $MP-DLP(G):$} 
\begin{align*}
\max &\ y \cdot \vec{1} \\
\text{s.t.}&\ yA \leq c \\
& y_{u}\geq 0.
\end{align*}%

Again, by the strong duality theorem for linear programming, 
\begin{align}\label{eq:frac}
 \mlp \leq \mlp_f = \gamma_{b,f} \leq \gb. 
\end{align}

Fractional broadcasts have yet to be studied in any detail; however, by (\ref{eq:frac}) it is possible that they will be useful in later investigation in determining which graphs have $\mlp=\gamma_b$.  In the next section, we present some early results and applications of fractional multipackings, as investigated by Brewster and Duchesne \cite{BD}.

\subsection{Applications of Fractional Multipackings}
\label{subsec:fracApps}

Similar to the way multipackings can be used to certify the minimality of a given dominating broadcast, fractional multipackings can also be used as a certification tool in graphs where $\mlp < \gb$.  For example, recall the graph $G_k$ defined by Hartnell and Mynhardt in \cite{HartMyn} and pictured in Figure \ref{fig:HartMynMPb} of Section \ref{sec:broadcast}.  The given broadcast $f$ with $f(c_i)=4$ for $i\equiv 2 (\mod 3)$, and $f(v)=0$ otherwise is clearly dominating, but showing that $f$ is a minimum cost dominating broadcast is not immediate.  The original proof by Hartnell and Mynhardt was fairly technical; Brewster and Duchesne offer a clever alternative.  

\begin{prop} \emph{\cite{HartMyn}}
The graph $G_k$ in Figure \ref{fig:HartMynMPb} has $\gamma_b(G_k) = 4k$.
\end{prop}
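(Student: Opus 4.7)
The plan is to bracket $\gamma_b(G_k)$ from both sides by $4k$. The upper bound is essentially free: the broadcast $f$ exhibited just above the proposition puts value $4$ on each of the $k$ vertices $c_i$ with $i\equiv 2\pmod 3$, so $\sigma(f)=4k$, and a short check using the local chain structure confirms that $N_4[c_{3j-1}]$ already contains every vertex of $H_{3j-2}\cup H_{3j-1}\cup H_{3j}$, so $f$ is dominating and $\gamma_b(G_k)\leq 4k$.

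For the matching lower bound I would invoke the chain $\mlp_f(G_k)=\gamma_{b,f}(G_k)\leq \gamma_b(G_k)$ from~(\ref{eq:frac}) and exhibit a feasible solution to MP-DLP$(G_k)$ of weight $4k$. Noting that we need average weight $4k/(3k)=4/3$ per $K_{2,4}$-block and exploiting the local symmetry of each block, I propose
\[
y_{c_i}=y_{u_i}=y_{r_i}=y_{s_i}=\tfrac{1}{3},\qquad y_{a_i}=y_{b_i}=0\qquad (1\leq i\leq 3k),
\]
which gives $y\cdot\vec{1}=3k\cdot\tfrac{4}{3}=4k$. Once this $y$ is shown feasible, we immediately obtain $4k\leq \mlp_f(G_k)\leq \gamma_b(G_k)\leq 4k$, so all equalities hold.

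The main obstacle is the feasibility check, i.e.\ verifying $\sum_{u\in N_s[v]} y_u\leq s$ for every $v\in V(G_k)$ and every $1\leq s\leq \diam(G_k)=9k-1$. The chain's periodicity reduces this to the six vertex types $c_i,u_i,r_i,s_i,a_i,b_i$ in a ``middle'' block, and the left--right reflective symmetry of $G_k$ further pairs $(c_i,u_i)$, $(r_i,s_i)$, and $(a_i,b_i)$, leaving three substantive cases. For each I would expand BFS shells outward through the chain, observing that each additional $H_j$ reached contributes $\tfrac{4}{3}$ to the weight while costing roughly three units of radius, so the asymptotic weight-to-radius ratio is $\tfrac{4}{9}<1$; the only tight constraints occur at small radii, namely $s=1$ at $u_i,c_i,r_i,s_i$, $s=2$ at $a_i,b_i$, and $s=4$ at $u_i,c_i$, each routine to check directly. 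The endpoint blocks $H_1$ and $H_{3k}$ yield only strictly looser, one-sided versions of the same inequalities, so no new case arises there.
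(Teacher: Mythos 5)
Your proposal is essentially the paper's proof: it uses the identical fractional multipacking ($y_{c_i}=y_{u_i}=y_{r_i}=y_{s_i}=1/3$, $y_{a_i}=y_{b_i}=0$) of weight $4k$, certifies $\gamma_b(G_k)\geq 4k$ via LP duality, and handles feasibility the same way (direct check for small radii, a periodicity/growth argument for large radii, which the paper phrases as $\sum_{u\in N_{\ell}[v]}y_u\leq\sum_{u\in N_{\ell-3}[v]}y_u+\tfrac{8}{3}$). The only slip is cosmetic: expanding the radius by three can capture a new block on \emph{each} side of the chain, so the asymptotic weight-to-radius ratio is $\tfrac{8}{9}$ rather than $\tfrac{4}{9}$, which is still below $1$ and does not affect the argument.
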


\noindent\textbf{Proof.} Define a fractional multipacking $y$ on $G_k$ such that for each $i\in\{1,2,\dots, 3k\}$,   $y_{r_i}=y_{s_i}=y_{c_i}=y_{u_i}=1\slash 3$ and $y_{a_i}=y_{b_i}=0$.  
It is easy to see that for any vertex $v\in V(G_k)$ and  $1 \leq \ell \leq 4 $,
\begin{align*}
	\sum_{u\in N_{\ell}[v]} y_u \leq \ell.
\end{align*}
Furthermore  for $\ell \geq 5$,
\begin{align*}
	\sum_{u\in N_{\ell}[v]} y_u \leq \sum_{u\in N_{\ell -3}[v]} y_u + \frac{8}{3} \leq \ell. 
\end{align*}
Therefore, $y$ is a feasible fractional multipacking and by strong duality,
\begin{align*}
		\mlp_f(G_k) = y\cdot \vec{1} = 4k = \gamma_b(G).
\end{align*}
\hfill \rule{0.5em}{0.5em}

\noindent This immediately gives the following nice corollary.  

\begin{coro} \emph{\cite{BD}} The $\mlp_f - \mlp$ difference can be arbitrarily large.  The integrality gap is at most $3 \slash 4$.
\end{coro}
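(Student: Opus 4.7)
The plan is to reuse the Hartnell--Mynhardt family $\{G_k\}$ already introduced in Section~\ref{sec:broadcast} and combine it with the value $\mlp_f(G_k)=4k$ just established in the preceding proposition. First I would pin down $\mlp(G_k)=3k$: the lower bound is immediate from Proposition~\ref{prop:mpDiam}, since $\diam(G_k)=9k-1$ yields $\lceil(\diam(G_k)+1)/3\rceil = 3k$; the matching upper bound uses the fact that each of the $3k$ induced copies of $K_{2,4}$ has diameter $2$, so the $1$-multipacking condition $|M\cap N_1[v]|\le 1$ forces each such copy to contribute at most one vertex to any multipacking $M$. Second, I would invoke the preceding proposition directly — its proof exhibits a feasible fractional multipacking of total weight $4k$ on $G_k$ and concludes via strong duality that $\mlp_f(G_k)=\gamma_b(G_k)=4k$.

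Combining these two values gives $\mlp_f(G_k)-\mlp(G_k)=4k-3k=k$, which tends to infinity with $k$ and establishes the first assertion of the corollary. For the second assertion, the same family yields the ratio $\mlp(G_k)/\mlp_f(G_k)=3k/(4k)=3/4$ for every $k\ge 1$, so the worst-case ratio of the integer optimum to the LP optimum is at most $3/4$, which is what is meant by the integrality gap statement.

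I do not expect any real obstacle, since essentially all the work has already been carried out in the preceding proposition — this corollary is a repackaging of that computation together with the previously proved value of $\mlp(G_k)$. The only item worth double-checking carefully is the upper bound $\mlp(G_k)\le 3k$: one must confirm that in each $K_{2,4}$ copy any two vertices share a common closed neighborhood with some third vertex (which they do, since any two vertices in a $K_{2,4}$ are at distance at most $2$), so the $1$-multipacking constraint really does cap each copy's contribution at one.
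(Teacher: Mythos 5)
Your proposal is correct and follows exactly the route the paper intends: the paper states this corollary as an immediate consequence of the preceding proposition ($\mlp_f(G_k)=4k$) together with the value $\mlp(G_k)=3k$ already established in Section~2 via the diametrical lower bound and the one-vertex-per-$K_{2,4}$ upper bound. Your verification of the details (including $\diam(G_k)=9k-1$ and the check that any two vertices of a $K_{2,4}$ copy lie in a common closed neighbourhood) matches the paper's argument.
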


Brewster and Duchesne also investigated fractional multipackings on vertex transitive graphs.  Let $G$ be a vertex transitive graph and $v$ any vertex of $G$.  Now let 
\begin{align*}
	w_v(r) = \frac{r}{|N_r[v]|}.
\end{align*}
Since $G$ is vertex transitive, $w_v(r)$ is the same for each vertex; let $w(r)$ be this common value.  Define
\begin{align*}
	w^* = \min_{1 \leq r \leq \rad(G)} w(r)
\end{align*}
and let $r^*$ be the value of $r$ such that $w^*=w(r^*)$.  For each $r^*$-neighbourhood of any vertex $v$, let $n=r^* \slash w^*$ be the number of vertices in $N_{r^*}[v]$.  By symmetry, if $u \in N_{r^*}[v]$, then $v\in N_{r^*}[u]$; therefore, each vertex belongs to exactly $n$ $r^*$-neighbourhoods.

\begin{theorem}\label{thm:vertexTrans} \emph{\cite{BD}}
For a vertex transitive graph $G = (V,E)$, let $y_v = w^*$ for all $v\in V$.  Then $y$ is a maximum fractional multipacking.
\end{theorem}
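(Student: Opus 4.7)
The plan is to prove optimality by exhibiting a feasible fractional broadcast whose cost matches $y \cdot \vec{1}$, and then invoking LP duality. Recall that vertex transitivity forces $e(v)=\diam(G)=\rad(G)$ for every $v$, so the range $1 \le r \le \rad(G)$ used to define $w^\ast$ exactly matches the indexing of the $(v,k)$-columns of the extended neighbourhood matrix $A$.

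First I would verify feasibility of $y$. For any $v\in V$ and any $k$ with $1\le k\le e(v)$,
\begin{align*}
\sum_{u\in N_k[v]} y_u \;=\; w^\ast\cdot|N_k[v]| \;=\; w^\ast\cdot\frac{k}{w(k)} \;\le\; k,
\end{align*}
since $w^\ast \le w(k)$ by the definition of $w^\ast$. Thus $yA\le c$ and $y\ge \vec{0}$, so $y$ is feasible for MP-DLP.

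Next, I would construct a primal feasible fractional broadcast $x$ with cost $|V|\cdot w^\ast$. Set
\begin{align*}
x_{v,k}=\begin{cases} 1/n & \text{if } k=r^\ast, \\ 0 & \text{otherwise,}\end{cases}
\end{align*}
for every $v\in V$, where $n=r^\ast/w^\ast=|N_{r^\ast}[v]|$. To see $x$ is feasible, fix $u\in V$ and note that, by the symmetry observation in the paper, $u$ belongs to exactly $n$ of the $r^\ast$-neighbourhoods $N_{r^\ast}[v]$, so
\begin{align*}
\sum_{(v,k)} A_{u,(v,k)}\, x_{v,k} \;=\; \frac{1}{n}\,\big|\{v:u\in N_{r^\ast}[v]\}\big| \;=\; \frac{n}{n} \;=\; 1,
\end{align*}
meeting the $Ax\ge\vec{1}$ constraint with equality. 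The cost is
\begin{align*}
c\cdot x \;=\; \sum_{v\in V} r^\ast\cdot\frac{1}{n} \;=\; |V|\cdot\frac{r^\ast}{n} \;=\; |V|\cdot w^\ast \;=\; y\cdot\vec{1}.
\end{align*}

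Finally, I would invoke weak LP duality: for any feasible primal $x'$ and feasible dual $y'$ of the MP-DLP/B-PLP pair, $y'\cdot\vec{1}\le c\cdot x'$. Since the feasible pair $(x,y)$ above attains equality, both are optimal, so $y$ is a maximum fractional multipacking. There is no real obstacle here beyond bookkeeping: the entire argument hinges on the two symmetry facts already established before the theorem, namely that $|N_{r^\ast}[v]|$ is a graph invariant and that every vertex lies in exactly $n$ of the $r^\ast$-balls. An alternative route would be to check complementary slackness directly, but exhibiting $x$ explicitly is cleaner and also yields $\mlp_f(G)=\gamma_{b,f}(G)=|V|\cdot w^\ast$ as a byproduct.
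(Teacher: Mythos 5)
Your proof is correct, and it differs from the paper's in packaging rather than in substance. The paper proves optimality by comparing $y$ directly against an arbitrary feasible dual solution $y'$: it writes $y\cdot\vec{1}=\sum_{u}r^*/n$, bounds each term below by $\frac{1}{n}\sum_{v\in N_{r^*}[u]}y'_v$ using feasibility of $y'$ on the ball $N_{r^*}[u]$, and then swaps the order of summation using the fact that each vertex lies in exactly $n$ of the $r^*$-balls, obtaining $y\cdot\vec{1}\geq y'\cdot\vec{1}$. You instead make the primal certificate explicit --- the uniform fractional broadcast $x_{v,r^*}=1/n$ --- verify $Ax=\vec{1}$ and $c\cdot x=|V|\cdot w^*=y\cdot\vec{1}$, and invoke weak duality. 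These are two faces of the same double-counting computation: the weights $1/n$ on the $r^*$-balls appear implicitly in the paper's chain of inequalities, and your weak-duality argument, when unwound, reproduces exactly that chain. What your version buys is that the optimal fractional broadcast is exhibited explicitly, so you get $\gamma_{b,f}(G)=\mlp_f(G)=|V|\cdot w^*$ as a stated byproduct (the paper only certifies the dual side); what the paper's version buys is that it never needs to discuss the primal LP at all. Your feasibility check is also the correct form of the paper's (whose displayed computation contains a typo, writing $r^*/|N_r[u]|$ where $w(r)=r/|N_r[u]|$ and the bound $w^*\leq w(r)$ are what is actually needed), and your observation that vertex transitivity gives $e(v)=\rad(G)$ for all $v$, so that the range of $r$ in the definition of $w^*$ covers all constraints, is a point the paper leaves implicit.
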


\begin{proof}
	Fix any $u \in V$ and consider $N_r[u]$ for some $1 \leq r \leq \rad(G)$.  Then, since
	\begin{align*}
		\sum_{v \in N_r[u]}y_v = w^* \cdot |N_r[u]| =  \frac{r^*}{|N_r[u]|}\cdot |N_r[u]| \leq  r,
	\end{align*}
	$y$ is feasible.  Now suppose that $y'$ is any other feasible fractional multipacking of $G$.  Then
	\begin{align*}
		y\cdot \vec{1} &= \sum_{u\in V} w^* = \sum_{u\in V} \frac{r^*}{n} 
		 \geq \sum_{u\in V}\frac{1}{n} \sum_{v \in N_{r^*}[u]}y_v' 
		= \sum_{v\in V}y_v'\left(\sum_{u \in N_{r^*}[v]}\frac{1}{n} \right)
		=  \sum_{v\in V}y_v'
		= y'\cdot \vec{1}.
	\end{align*} 
	Thus $y\cdot \vec{1} \geq y'\cdot \vec{1}$ for any other fractional multipacking $y'$, and  therefore $y$ is a maximum fractional multipacking.
\end{proof}

For example, consider the Petersen graph $P$.  For any vertex $v \in V(P)$, 
\begin{align*}
w(1)= \frac{1}{4} > \frac{2}{10} = \frac{1}{5}= w(2).
\end{align*}
Thus for the Petersen graph, $w^* = 1 \slash 5$, and $\mlp_f(P) = 2$. Notice that since  $\diam(P)=\rad(P) =2$, $\mlp(P)=1$ and $\gamma_b(P)=2$.  Therefore, the Petersen graph provides another  example where $	\mlp(P) < \mlp_f(P) = \gamma_b(G)$.

This notion of spreading the minimum fractional multipacking weight around a graph can also be used  as a lower bound of $\mlp_f$ for general graphs.  For a graph $G$, let 
\begin{align*}
w^* = \min_{v\in V, \ 1 \leq r \leq e(v)} w_v(r)
\end{align*}
and again set $y_v=w^*$ for all $v \in V$.  Then $|V|\cdot w^*$ is a trivial lower bound for $\mlp_f$.

\section{Farber's Algorithm}
\label{sec:farber}

In this section we examine a new perspective on broadcasts and multipackings currently being developed by Brewster and Duchesne in \cite{BD}, and Brewster, MacGillivray and Yang in \cite{BMY}.  This research provides an exciting amalgamation  between the very young concept in multipackings and an older algorithm developed by Martin Farber in the early 1980's.  Since strongly chordal graphs play a pivotal role in the use of Farber's algorithm, we begin this section with an introduction to strongly chordal graphs and some of  their characterizations. 

\subsection{Strongly Chordal Graphs} \label{subsec:strChord}

A graph is \emph{chordal} (or \emph{triangulated}) if it does not contain an induced cycle of length greater than three.  The class of chordal graphs contains many famous families including trees, threshold graphs, interval graphs, split graphs, and maximal outerplanar graphs.  A graph $G=(V,E)$ is said to have a \emph{perfect elimination ordering} if its vertices can be ordered $v_1, v_2,\dots, v_n$ such that for each $i,j$ and $\ell$, if $i<j, i<\ell$ and $v_{\ell}, v_j \in N[v_i]$, then $v_{\ell} \in N[v_j]$.  In \cite{Rose}, Rose showed that a graph is chordal if and only if it has a perfect elimination ordering.

Farber \cite{F83} strengthened this condition by defining a \emph{strong elimination ordering}.  A strong elimination ordering of a graph $G=(V,E)$ is a vertex ordering $v_1, v_2,\dots, v_n$ such  that for each $i,j,k$ and $\ell$, if  $i<j, k<\ell, v_k, v_{\ell} \in N[v_i]$ and $v_k \in N[v_j]$, then $v_{\ell} \in N[v_j]$.  Farber defined a graph to be \emph{strongly chordal} if it admits a strong elimination ordering.   We examine three of Farber's characterizations of strongly chordal graphs.

\begin{theorem}\emph{\cite{F83}} \label{thm:simplechord}
A graph is strongly chordal if and only if every induced subgraph of $G$ contains some vertex $v$ such that for each $u,w \in N[v]$, $N[u] \subset N[w]$ or vice versa; that is, $G$ has a \emph{simple} vertex $v$.
\end{theorem}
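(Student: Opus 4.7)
The plan is to prove the biconditional in two directions, using an easy restriction observation together with induction on $|V(G)|$. The key tool for both directions is that if $v_1, v_2, \ldots, v_n$ is a strong elimination ordering of $G$, then its restriction to any induced subgraph $H$ of $G$ is a strong elimination ordering of $H$: the closed neighborhoods satisfy $N_H[v] = N_G[v] \cap V(H)$, so the hypothesis $v_k, v_\ell \in N[v_i]$ only becomes harder to satisfy in $H$, while the conclusion passes through.

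For the forward direction, suppose $G$ has a strong elimination ordering $v_1, \ldots, v_n$. By the restriction observation it suffices to show $v_1$ is simple. I would argue by contradiction: suppose $u = v_a$ and $w = v_b$ in $N[v_1]$, with $a < b$ after relabeling, have incomparable closed neighborhoods, so there exist $v_c \in N[v_a] \setminus N[v_b]$ and $v_d \in N[v_b] \setminus N[v_a]$. Note $c > 1$, for otherwise $v_c = v_1 \in N[v_b]$. Applying the strong elimination condition with $(i, j, k, \ell) = (1, c, a, b)$ satisfies every hypothesis ($v_a, v_b \in N[v_1]$ by assumption, and $v_a \in N[v_c]$ since $v_c \in N[v_a]$), and its conclusion forces $v_b \in N[v_c]$, contradicting $v_c \notin N[v_b]$.

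For the backward direction, I would induct on $|V(G)|$. The base case is trivial. For the inductive step, let $v$ be a simple vertex of $G$ furnished by the hypothesis. Every induced subgraph of $G - v$ is also an induced subgraph of $G$, so the simple-vertex hypothesis is inherited and $G - v$ admits a strong elimination ordering $v_2, \ldots, v_n$ by induction. Setting $v_1 := v$, verify the strong elimination condition on $v_1, v_2, \ldots, v_n$. The case $i \geq 2$ reduces to the inductive ordering of $G - v$, handling the possibility $v_k = v_1$ separately using $v_i, v_j \in N[v_1]$ together with simplicity. The crux is $i = 1$: since $v_1$ is simple, $N[v_k]$ and $N[v_\ell]$ are comparable, and because $v_j \in N[v_k]$ the desired conclusion $v_\ell \in N[v_j]$ follows provided $N[v_k] \subseteq N[v_\ell]$.

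The main obstacle is precisely this last point: simplicity asserts only that $\{N[u] : u \in N[v_1]\}$ forms a chain under inclusion, not that this chain is monotone with respect to the index order supplied by induction. To overcome this I would strengthen the inductive hypothesis, producing a strong elimination ordering with the additional property that, at each stage, the vertices of $N[v_i] \cap \{v_{i+1}, \ldots, v_n\}$ appear in order of increasing closed neighborhood. This compatibility can be maintained by choosing carefully, at each step, which simple vertex to eliminate and in what order to list its neighbors; the chain supplied by the simple vertex definition is exactly what makes this refinement possible. The remaining bookkeeping is routine, but this interplay between the chain structure on $\{N[u] : u \in N[v]\}$ and the linear order witnessing the SEO is where the conceptual content of Farber's characterization lies.
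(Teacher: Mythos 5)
Your forward direction is correct and complete: the observation that a strong elimination ordering restricts to one on any induced subgraph, followed by the application of the defining condition with $(i,j,k,\ell)=(1,c,a,b)$, does show that the first vertex of the ordering is simple (indeed it shows the stronger fact that for $v_a,v_b\in N[v_1]$ with $a<b$ one always has $N[v_a]\subseteq N[v_b]$). Note that the paper itself does not prove Theorem~\ref{thm:simplechord}; it defers to Farber's Algorithm~\ref{alg:simpleV}, so there is no in-paper argument to compare against, but your forward direction agrees with the standard one.

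The backward direction, however, has a genuine gap exactly at the point you flag and then dismiss as ``routine bookkeeping.'' You correctly observe that simplicity of $v_1$ only yields that $\{N[u]:u\in N[v_1]\}$ is a chain, and that closing the induction requires this chain to be aligned with the linear order produced recursively on $G-v_1$; your fix is to strengthen the inductive hypothesis and assert that the alignment ``can be maintained by choosing carefully \ldots which simple vertex to eliminate.'' That assertion is the theorem's actual content, and it does not follow from what you have written. Two things must be proved. First, that the containment constraints accumulated across stages (the chain on $N[v_1]$ in $G$, the chain on $N_{G_2}[v_2]$ in $G_2=G-v_1$, and so on) are jointly acyclic; this part is manageable, since $N_{G_i}[u]\subseteq N_{G_i}[w]$ persists when passing to $G_{i+1}$. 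Second, and critically, that at each stage there \emph{exists} a simple vertex that is \emph{minimal} with respect to the constraints already accumulated --- otherwise placing the chosen simple vertex at the front of the remaining order violates an earlier constraint of the form ``$w$ must precede it.'' Nothing in the hypothesis guarantees that an arbitrary simple vertex of $G_i$ is minimal in this partial order, and showing that a minimal one can always be found is a nontrivial lemma. This is precisely what Farber's Algorithm~\ref{alg:simpleV} is engineered to do: it maintains the partial order $(V_i,<_i)$ refined at each step by the relations $N_i[v]\subsetneq N_i[u]$ and selects a simple vertex minimal in $(V_i,<_i)$, and the correctness of that selection step is where the work lies. As written, your induction does not close.
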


To prove this result, Farber developed Algorithm \ref{alg:simpleV} below (different from this section's namesake) that when given any graph $G$ as input will either find a strong elimination ordering of $G$ or locate an induced subgraph of $G$ with no simple vertex.  This algorithm is useful in its own right, as some algorithms (e.g. Algorithms \ref{alg:farb} and \ref{alg:BDfarb}) require a strong elimination ordering as input.

\begin{algo}\label{alg:simpleV}\emph{\cite{F83}} \ 
\begin{enumerate}[leftmargin=1in]
	\item[\emph{\textbf{Input:}}]  A graph $G=(V,E)$.
	\item[\emph{\textbf{Output:}}] A strong elimination ordering or an induced subgraph without a simple vertex.
	\item[\emph{\textbf{Initial:}}] Set $n \rightarrow |V|$.
	\item[\emph{\textbf{Step 1:}}]  Let $V_0=V$ and let $(V_0, <_0)$ be a partial ordering on $V_0$ with $v<_0 u $ if and only if $v=u$.  Let $V_1=V$ and set  $i\leftarrow 1$.
	\item[\emph{\textbf{Step 2:}}]  	Let $G_i$ be the subgraph of $G$ induced by $V_i$.  If $G_i$ has no simple vertex, \emph{OUTPUT} $G_i$ and \emph{STOP}.  Otherwise, define an ordering on $V_i$ by $v <_i u$ if $v<_{i-1} u$ or $N_i[v] \subsetneq N_i[u]$.  
		\item[\emph{\textbf{Step 3:}}] Choose  a $v_i$ which is simple in $G_i$ and minimal in $(V_i, <_i)$.  Let $V_{i+1}=V_i-\{v_i\}$.  If $i=n$ \emph{OUTPUT} ordering $v_1,v_2,\dots,v_n$ of $V$ and \emph{STOP}.  Otherwise, set $i\leftarrow i+1$ and \emph{GO TO  Step 2}.
\end{enumerate} 
\end{algo}

Farber also determined a forbidden subgraph characterization for strongly chordal graphs. A \emph{trampoline} is a split graph $G$ on $2n$ vertices for $n \geq 3$, with vertex partitions $W=\{w_1,w_2,\dots, w_n\}$ and $U=\{u_1,u_2,\dots, u_n\}$, where $W$ is independent, $G[U] \cong K_{|U|}$ and for each $i$ and $j$, $u_jw_i \in E(G)$ if and only if $i=j$ or $i\equiv j+1 (\mod n)$.
The trampolines on four and six vertices are illustrated in Figure \ref{fig:tramps} with the vertices of $U$ in blue and the vertices of $W$ in red, when viewed in colour.

\begin{figure}[h]
			\centering
				\begin{tikzpicture}				
					\node [bblue](u1) at (1,0) {};
					\node [bblue](u2) at (0.5,0.8) {};
					\node [bblue](u3) at (1.5,0.8) {};
					
					\node [smred](w1) at (0,0) {};
					\node [smred](w2) at (2,0) {};
					\node [smred](w3) at (1,1.7) {};					
			
				\draw(u1)--(w1)--(u2)--(w3)--(u3)--(w2)--(u1);
				\draw (u1)--(u2)--(u3)--(u1);			
			\end{tikzpicture}	
					\hspace{1cm}
			\begin{tikzpicture}				
					\node [bblue](u1) at (0.5,0.5) {};
					\node [bblue](u2) at (1.5,0.5) {};
					\node [bblue](u3) at (1.5,1.5) {};
					\node [bblue](u4) at (0.5,1.5) {};
					
					\node [smred](w1) at (1,0) {};
					\node [smred](w2) at (2,1) {};
					\node [smred](w3) at (1,2) {};	
					\node [smred](w4) at (0,1) {};					
			
				\draw(u1)--(w1)--(u2)--(w2)--(u3)--(w3)--(u4)--(w4)--(u1);
				\draw (u1)--(u2)--(u3)--(u4)--(u1)--(u3)--(u4)--(u2);			
			\end{tikzpicture}				
				\caption{Trampolines with $n=2$ and $n=3$.}
	\label{fig:tramps}
\end{figure}
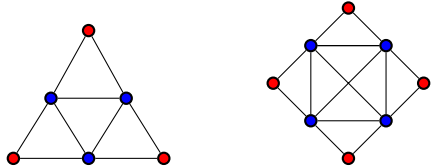

\begin{theorem}\emph{\cite{F83}} \label{thm:tramp}
A chordal graph is strongly chordal if and only if it contains no induced trampoline subgraph.
\end{theorem}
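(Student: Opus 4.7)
The proof splits into two directions. For the easy direction (strongly chordal $\Rightarrow$ no induced trampoline), my plan is to combine Theorem \ref{thm:simplechord} with the observation that strong chordality is hereditary (any subordering of a strong elimination ordering is itself a strong elimination ordering of the induced subgraph, since closed neighborhoods simply restrict). So it is enough to verify that no trampoline $T_n$ has a simple vertex. This is a direct check: for any $w_i \in W$, its two neighbors in the clique are $u_{i-1}$ and $u_i$, whose closed neighborhoods contain $w_{i-1}$ and $w_{i+1}$ respectively and are therefore incomparable; for any $u_i \in U$, the pair $w_i, w_{i+1} \in N[u_i]$ has closed neighborhoods $\{w_i, u_{i-1}, u_i\}$ and $\{w_{i+1}, u_i, u_{i+1}\}$, again incomparable. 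Hence $T_n$ is not strongly chordal, and no graph containing an induced $T_n$ can be strongly chordal.

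For the converse (chordal and trampoline-free $\Rightarrow$ strongly chordal), my plan is to run Algorithm \ref{alg:simpleV} on $G$. Either it outputs a strong elimination ordering (and we are done), or it returns an induced subgraph $G_i$ with no simple vertex. Since induced subgraphs of chordal graphs are chordal, $G_i$ is chordal. It therefore suffices to establish the following key claim: \emph{every chordal graph $H$ with no simple vertex contains an induced trampoline}. Contrapositively, this yields the desired implication.

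To prove the key claim, I would start from a simplicial vertex $v$ of $H$ (which exists because $H$ is chordal). Since $v$ is not simple, there exist $a, b \in N[v]$ with $N[a]$ and $N[b]$ incomparable; pick witnesses $a' \in N[a] \setminus N[b]$ and $b' \in N[b] \setminus N[a]$. Because $N[v]$ is a clique, $a', b' \notin N[v]$; otherwise $a'$ would be adjacent to both $a$ and $b$. This produces a short cycle through $v$ of length at least $4$ which, by chordality, must carry chords of a very restricted form. The strategy is to iterate: at each stage one uses the no-simple-vertex hypothesis on an appropriate induced subgraph to force a further ``rim'' vertex of degree $2$ whose two neighbors have incomparable closed neighborhoods, and chordality then pushes the new structure into the next position of a would-be trampoline. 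The goal is to produce $n$ rim vertices $w_1, \ldots, w_n$ spanning an independent set and $n$ hub vertices $u_1, \ldots, u_n$ spanning a clique with the exact adjacency $u_j w_i \in E$ iff $i \equiv j$ or $j{+}1 \pmod n$.

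\textbf{Main obstacle.} The delicate part is controlling precisely which chords appear in the growing cyclic structure. Chordality forces many chords in any $\geq 4$-cycle, so one has to argue that the only possible chords are those that land among the hubs $u_i$ (giving the clique $G[U]$), while the rim $W$ remains independent and each $w_i$ keeps degree exactly $2$. This is where Farber's original proof is most technical; my plan would be to set up the construction inductively so that a deviation from the trampoline adjacency pattern always exposes a simple vertex in some induced subhypergraph of $H$, contradicting the hypothesis and forcing the trampoline to close up exactly.
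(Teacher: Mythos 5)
The paper does not actually prove Theorem \ref{thm:tramp}; it is quoted from Farber \cite{F83} without argument, so there is no in-paper proof to compare your route against. Judged on its own terms, your forward direction is complete and correct: strong elimination orderings restrict to induced subgraphs (so strong chordality is hereditary), and your case check that no vertex of a trampoline is simple is accurate under the paper's adjacency convention ($w_i$ is adjacent exactly to $u_i$ and $u_{i-1}$), so Theorem \ref{thm:simplechord} immediately rules out any induced trampoline in a strongly chordal graph. The reduction of the converse to the key claim --- \emph{every chordal graph with no simple vertex contains an induced trampoline} --- via Algorithm \ref{alg:simpleV} and the heredity of chordality is also the right skeleton, and the key claim is indeed a true statement (it is the heart of Farber's theorem).

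The genuine gap is that the key claim is never proved. Everything after ``To prove the key claim'' is a statement of intent, not an argument: you produce the initial witnesses $a,a',b,b'$ and a cycle of length at least $4$, but the step that matters --- showing that chordality plus the absence of a simple vertex forces the chords to organize themselves into exactly the trampoline pattern (hubs forming a clique, rim independent, each rim vertex of degree exactly $2$, and the cycle closing up at some finite $n\ge 3$) --- is exactly the part you label the ``main obstacle'' and then do not carry out. There is no induction hypothesis stated, no invariant maintained from one ``rim vertex'' to the next, no argument that the process terminates, and no argument excluding the degenerate configurations (e.g.\ a rim vertex acquiring a third neighbour, or two rim vertices becoming adjacent, or the structure collapsing back onto itself before reaching length $3$). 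Since the entire combinatorial content of the theorem lives in precisely that construction, the proposal as written establishes only the easy direction and a correct reduction for the hard one; it is not yet a proof.
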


The final characterization we present here is the most relevant to the study of multipackings, and graph optimization problems in general.  For a graph $G$ with $V(G)=\{v_1,v_2,\dots,v_n\}$, the \emph{neighbourhood matrix} $M(G)$ is the $n \times n$ vertex-closed neighbourhood incidence matrix of $G$ with $m_{ij}=1$ if $v_i \in N[v_j]$ and $m_{ij}=0$ otherwise.

\begin{prop}\emph{\cite{F83}}\label{prop:strElimGam}
For a graph $G$, the ordering $v_1,v_2,\dots, v_n$ of its vertices is a strong elimination ordering if and only if the \emph{$\Gamma$-matrix},
\begin{align*}
\Gamma = \begin{bmatrix}
1&1\\
1&0
\end{bmatrix},
\end{align*}
is not a submatrix of the neighbourhood matrix $M(G)$.
\end{prop}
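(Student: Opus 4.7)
The plan is to prove this equivalence by contraposition, showing that the existence of a $\Gamma$-submatrix in $M(G)$ is a direct combinatorial translation of the failure of the strong elimination ordering condition. This turns the biconditional into a line-by-line dictionary between matrix entries and closed-neighbourhood memberships, with essentially no hidden work.

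First, I would unpack the meaning of ``$\Gamma$ is a submatrix of $M(G)$.'' By definition of submatrix, this asserts the existence of row indices $k < \ell$ and column indices $i < j$ such that
\begin{align*}
\begin{bmatrix} m_{k,i} & m_{k,j} \\ m_{\ell,i} & m_{\ell,j} \end{bmatrix} = \begin{bmatrix} 1 & 1 \\ 1 & 0 \end{bmatrix}.
\end{align*}
Using the defining property $m_{p,q} = 1 \iff v_p \in N[v_q]$ of the neighbourhood matrix, this is equivalent to: there exist $i<j$ and $k<\ell$ with $v_k \in N[v_i]$, $v_\ell \in N[v_i]$, $v_k \in N[v_j]$, and $v_\ell \notin N[v_j]$.

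Next, I would compare this with the definition of a strong elimination ordering. That definition states that for all indices $i<j$ and $k<\ell$, the conjunction ``$v_k, v_\ell \in N[v_i]$ and $v_k \in N[v_j]$'' forces $v_\ell \in N[v_j]$. The negation is precisely the statement isolated above: there exist $i<j$ and $k<\ell$ with $v_k, v_\ell \in N[v_i]$, $v_k \in N[v_j]$, and $v_\ell \notin N[v_j]$. Thus $v_1,\dots,v_n$ fails to be a strong elimination ordering if and only if $\Gamma$ occurs as a submatrix of $M(G)$, and taking contrapositives yields the proposition.

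There is no real obstacle; the only thing requiring care is the bookkeeping between rows and columns of $M(G)$. In particular, I must remember that rows are indexed by the ``element'' vertex and columns by the ``centre'' of the closed neighbourhood, so that the row inequality $k<\ell$ matches the pair $(k,\ell)$ appearing in the strong elimination quantifier and the column inequality $i<j$ matches the pair $(i,j)$. Once the indices are aligned, the equivalence is immediate from the definitions.
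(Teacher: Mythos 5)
Your proof is correct and matches the paper's treatment: the paper simply declares the proposition ``immediate by the definition of strong elimination ordering,'' and your contrapositive translation between the $\Gamma$-pattern at rows $k<\ell$, columns $i<j$ and the failure of the ordering condition is exactly the dictionary being invoked. You have merely written out the bookkeeping the paper leaves implicit, and it is done correctly.
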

This result is immediate by the definition of strong elimination ordering.  Thus $G$ is strongly chordal if and only if $M(G)$ is $\Gamma$-free.  A $(0,1)$-matrix is \emph{totally balanced} if it does not contain an incidence matrix of any cycle of length at least three as a submatrix.  

\begin{theorem}\emph{\cite{F83}}\label{thm:strChordTB}
A graph $G$ is strongly chordal if and only if $M(G)$ is totally balanced. 
\end{theorem}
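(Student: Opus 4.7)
The plan is to prove each direction by connecting matrix patterns to the two structural characterizations of strongly chordal graphs already at hand: the simple-vertex criterion of Theorem \ref{thm:simplechord} and the forbidden-trampoline criterion of Theorem \ref{thm:tramp}. The forward implication will follow from an induction on $|V(G)|$ exploiting a simple vertex, and the reverse implication will be established by its contrapositive, producing a cycle incidence submatrix whenever strong chordality fails.

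For the forward direction, assume $G$ is strongly chordal and induct on $|V(G)|$, the base case being trivial. Theorem \ref{thm:simplechord} supplies a simple vertex $v$ of $G$, and since every induced subgraph of a strongly chordal graph is strongly chordal (again by Theorem \ref{thm:simplechord}), $G-v$ is strongly chordal. Hence $M(G-v)$, which is the principal submatrix of $M(G)$ obtained by deleting row and column $v$, is totally balanced by induction. Suppose for a contradiction that $M(G)$ contains the incidence matrix of some $C_k$ with $k\geq 3$ on rows $\{u_1,\dots,u_k\}$ and columns $\{w_1,\dots,w_k\}$ arranged so that row $u_i$ has its two $1$-entries at columns $w_i$ and $w_{i+1}$ (indices mod $k$). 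If $v$ is in neither index set, this submatrix also sits inside $M(G-v)$, contradicting induction. Otherwise, using that $M(G)$ is symmetric and that the transpose of a cycle incidence matrix is again a cycle incidence matrix, we may take $v=u_1$; then $w_1,w_2\in N[v]$, and simplicity of $v$ forces $N[w_1]\subseteq N[w_2]$ or $N[w_2]\subseteq N[w_1]$. The first inclusion combined with $u_k\in N[w_1]$ gives $u_k\in N[w_2]$, contradicting the $0$-entry at position $(u_k,w_2)$ of the cycle submatrix, while the second inclusion is handled symmetrically using position $(u_2,w_1)$.

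For the reverse direction I would prove the contrapositive. If $G$ is not strongly chordal, Theorem \ref{thm:tramp} says that either $G$ is not chordal or $G$ contains an induced trampoline. In the trampoline case, with clique part $U=\{u_1,\dots,u_n\}$ and independent part $W=\{w_1,\dots,w_n\}$, $n\geq 3$, the defining adjacency $w_i\sim u_i,u_{i-1}$ yields $N[w_i]\cap U=\{u_i,u_{i-1}\}$, and $U\cap W=\varnothing$, so the submatrix $M(G)[U,W]$ equals the $n\times n$ incidence matrix of $C_n$ on the nose. In the non-chordal case, given an induced cycle $v_1,v_2,\dots,v_k$ with $k\geq 4$, one selects an alternating set of vertices from $V(C_k)$ to realize a small cycle incidence submatrix of $M(G)$: for even $k\geq 6$, rows $\{v_1,v_3,\dots,v_{k-1}\}$ and columns $\{v_2,v_4,\dots,v_k\}$ yield the incidence matrix of $C_{k/2}$; for odd $k\geq 5$, rows $\{v_1,v_3,\dots,v_k\}$ and columns $\{v_2,v_4,\dots,v_{k-1},v_k\}$ yield the incidence matrix of $C_{(k+1)/2}$; and for $k=4$, rows $\{v_1,v_2,v_3\}$ and columns $\{v_1,v_3,v_4\}$ yield the incidence matrix of $C_3$.

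The main obstacle is uniformity in the non-chordal sub-case: a single alternation rule does not cover all $k\geq 4$, since strict alternation degenerates to an all-ones $2\times 2$ block when $k=4$ and leaves a neighbour on the cycle unmatched when $k$ is odd, so the argument splits into the three regimes above and each must be verified to exhibit the required row- and column-sum pattern. The forward induction and the trampoline extraction, in contrast, are essentially forced by the definitions of simple vertex and of the trampoline bipartition.
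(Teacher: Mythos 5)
Your proof is correct, but it takes a genuinely different route from the paper's. The paper deduces Theorem \ref{thm:strChordTB} from Proposition \ref{prop:strElimGam} together with the observation that $\Gamma$ occurs as a submatrix of the incidence matrix of every cycle of length at least three under any ordering of its rows and columns; this makes the implication from strong chordality to total balancedness immediate, but the converse direction silently rests on the nontrivial fact that a totally balanced matrix can always be reordered to be $\Gamma$-free. You bypass orderings entirely: your forward direction is an induction on $|V(G)|$ that peels off a simple vertex (Theorem \ref{thm:simplechord}) and uses the nested closed neighbourhoods to kill any putative cycle submatrix meeting $v$, and your reverse direction runs the contrapositive through Theorem \ref{thm:tramp}, reading the incidence matrix of $C_n$ off the rows $U$ and columns $W$ of an induced trampoline, or extracting a cycle incidence submatrix from a chordless cycle by alternation. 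I checked the three parity regimes in the non-chordal case, including the use of the diagonal entries $m_{vv}=1$ of the closed-neighbourhood matrix in the $k=4$ and odd-$k$ selections, and they all produce the required row and column sums. What the paper's route buys is brevity and a direct link to the ordering characterization that drives Farber's algorithm; what yours buys is a self-contained, elementary argument relying only on the two characterizations already proved, which moreover outputs an explicit cycle submatrix certifying non-balancedness whenever $G$ fails to be strongly chordal.
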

Farber's proof is immediate by Proposition \ref{prop:strElimGam}, observing that the $\Gamma$-matrix is an edge-vertex submatrix of every cycle of length at least three.  


\subsection{Farber's Primal-Dual Algorithm} \label{subsec:FarbAlg}

Farber's original algorithm  \cite{Farber} is a linear-time search developed to find a minimum weight dominating set of a vertex subset of a strongly chordal graph.    Following his notation, the weight of each vertex $v_i$ is denoted  $w_i$.  Since the problem of finding a minimum weight dominating set with arbitrary real weights can be reduced to the problem with real positive weights, we proceed with the assumption that $w_i > 0$ for all $i$.  Furthermore, for vertices $v_i$ and $v_j$, if $i=j$ or $v_iv_j \in E(G)$, we write $i \sim j$.  The definition of strong elimination ordering can be altered to utilize this new notation.

\begin{lemma}\label{lem:strElim} \emph{\cite{Farber}} For a graph $G$, an ordering $v_1, v_2,\dots, v_n$ of its vertices is a strong elimination ordering if and only if for each $i,j,k,\ell$, with $i\leq j$ and $k\leq \ell$, and $i\sim k, i \sim \ell$ and $j \sim k$, then $j\sim \ell$.	
\end{lemma}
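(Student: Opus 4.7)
The plan is to show that the two defining conditions (the one given originally in Section 4.1 and the one in Lemma \ref{lem:strElim}) are equivalent; the only difference between them is that the original uses strict inequalities $i<j$ and $k<\ell$ together with $N[\,\cdot\,]$ membership, whereas the restated version allows $i\le j$ and $k\le\ell$ and uses the relation $\sim$. The link between the two notations is the observation that $a\sim b$ means $a=b$ or $v_av_b\in E$, which is exactly $v_b\in N[v_a]$ (and symmetrically $v_a\in N[v_b]$), since we treat $v_a\in N[v_a]$ by convention.

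First I would prove the easy direction: the new condition implies the old. Suppose the ordering satisfies the condition in Lemma \ref{lem:strElim}, and take any indices with $i<j$, $k<\ell$, $v_k,v_\ell\in N[v_i]$ and $v_k\in N[v_j]$. Translating the membership statements into $\sim$ gives $i\sim k$, $i\sim\ell$ and $j\sim k$; since in particular $i\le j$ and $k\le\ell$, the hypothesis yields $j\sim\ell$, which is just $v_\ell\in N[v_j]$.

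For the other direction, assume the ordering is a strong elimination ordering in the original sense, and consider indices $i\le j$, $k\le\ell$ satisfying $i\sim k$, $i\sim\ell$ and $j\sim k$. I split into cases on whether the inequalities are strict. If $i=j$, the desired conclusion $j\sim\ell$ is literally the hypothesis $i\sim\ell$. If $k=\ell$, then $j\sim\ell$ is the hypothesis $j\sim k$. The remaining case is $i<j$ and $k<\ell$; here I translate $i\sim k$, $i\sim\ell$, $j\sim k$ into $v_k,v_\ell\in N[v_i]$ and $v_k\in N[v_j]$ and apply the original definition to conclude $v_\ell\in N[v_j]$, i.e. $j\sim\ell$.

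I do not expect any real obstacle here; the content of the lemma is just a convenient reformulation that absorbs the reflexive cases $i=j$ and $k=\ell$ into a single uniform statement. The only subtlety worth mentioning explicitly is the convention that $v\in N[v]$, which is what makes the $i=j$ and $k=\ell$ cases collapse trivially, so I would state that convention up front before beginning the case analysis.
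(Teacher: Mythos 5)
Your proof is correct and follows exactly the route the paper intends: the paper presents this lemma as an immediate notational restatement of the definition of strong elimination ordering (offering no further argument), and your verification—translating $\sim$ into closed-neighbourhood membership and absorbing the reflexive cases $i=j$ and $k=\ell$, which hold trivially from the hypotheses $i\sim\ell$ and $j\sim k$ respectively—is precisely the routine check being elided. Nothing is missing.
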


We present a slight modification to Farber's original LP problems here; for simplicity, in this paper we consider only the problem of finding a weighted dominating set of the entire graph, not just a vertex subset.

\hspace{5mm}\textbf{The Primal $P(G):$} \vspace{-5mm}
\begin{align*}
\min \hspace{3mm}& \sum_{i=1}^{n}w_ix_i \\
\hspace{5mm}\text{s.t.}& \sum_{i\sim j} x_i \geq 1 \text{ for each } j,\\
& x_i\geq 0 \text{ for each } i.
\end{align*}

\hspace{5mm}\textbf{The Dual $D(G):$} \vspace{-5mm}
\begin{align*}
\max \hspace{3mm}& \sum_{j=1}^{n}y_j \\
\hspace{5mm}\text{s.t.}& \sum_{j\sim i} y_j \leq w_i \text{ for each } i,\\
& y_j\geq 0 \text{ for each } j.
\end{align*}

Farber's algorithm, as shown in Algorithm \ref{alg:farb}, solves the above LP $P(G)$ and its dual $D(G)$.  When each vertex $v_i$ of a graph $G$ has weight $w_i=1$, this is equivalent to finding a $\gamma$-set in the primal problem and a $\rho$-set in the dual problem. 

Algorithm \ref{alg:farb} executes in two stages.  In the first stage, it finds an optimal solution to the dual problem $D(G)$ by scanning the vertices in the given strong elimination order $v_1, v_2, \dots, v_n$. Here, it greedily adds vertices to the 2-packing by considering the available slackness in each associated neighbourhood of each vertex. In the second stage the algorithm finds an optimal solution to the primal $P(G)$ by scanning the closed neighbourhoods of vertices in reverse strong elimination order $N[v_n], N[v_{n-1}],\dots, N[v_1]$. This finds a dominating set by examining vertices whose neighborhoods have no remaining slack after Stage~1.  

The set $T$ is used to assure that the complementary slackness conditions are satisfied as the algorithm moves through Stage 2.  Farber also defines 

\begin{align}\label{eq:h(i)} h(i)=w_i - \sum_{j\sim i}y_j \end{align}
and 
\begin{align}T_i=\{k: k\sim i \text{ and } y_k>0 \}\end{align}  
to track available slackness.  When the algorithm begins, $T=\{1,2,\dots n\}$, $x_i = 0$ and $y_j=0$; the 2-packing and dominating sets are empty, and thus every vertex has slack.  

\begin{algo}\label{alg:farb}\emph{\cite{BD}} \ 
\begin{enumerate}[leftmargin=1in]
	\item[\emph{\textbf{Input:}}]  A strongly chordal graph $G=(V,E)$ with strong elimination ordering \linebreak $v_1,v_2,\dots, v_n$ and positive vertex weights $w_1, w_2,\dots, w_n$.
	\item[\emph{\textbf{Output:}}] Optimal solutions to $P(G)$ and $D(G)$.
	\item[\emph{\textbf{Initial:}}] Set $T=\{1,2,\dots,n\}$ and each $y_j=0, x_i=0$.
	\item[\emph{\textbf{Step 1:}}]  For each $j=1,\dots, n$, set $y_j \leftarrow \min\{h(k):k \sim j\}$.
	\item[\emph{\textbf{Step 2:}}]  For each $i=n, \dots, 1$, if $h(i)=0$ and $T_i \subset T$, then set $x_i\leftarrow 1$ and $T \leftarrow T-T_i$.	
\end{enumerate} 
\end{algo}

The algorithm clearly halts in $O(2n)$ operations.  Step 1 ensures that $y_j \geq 0$ and $h(j) \geq0$ for $j$, and therefore the solution presented by Algorithm \ref{alg:farb} is dual feasible.  Furthermore, for each $i$, $x_i \in \{0,1\}$.  Thus to show the feasibility of the primal solution, it suffices to show that for each $j$, there is some $i \sim j$ with $x_i=1$.   Since $y_j$ is the $\min\{h(k):k \sim j\}$, there exists some $k \sim j$ such that $h(k)=0$ and $\max T_k \leq j$.  If $x_k=1$, we are done.  Otherwise, if $x_k=0$, when the algorithm scanned $v_k$ in Stage 2 $T_k$ was not a subset of $T$.  Since the vertices in Stage 2 are scanned in descending index order, this implies that there is some $\ell > k$ such that $x_{\ell}=1$ and $T_{\ell}\cap T_k \neq \varnothing$.  Let $i \in T_{\ell} \cap T_k$, and then by transitivity of $\sim$, $\ell \sim i \sim k \sim j$.  It follows that $i \leq j$ since $\max T_k \leq j$.   The vertices were presented in a strong elimination order, and so by  Lemma \ref{lem:strElim} it follows that $\ell \sim j$.    Therefore, there exists some $\ell \sim j$ with $x_{\ell} =1$ as required.  This demonstrates the feasibility of the primal solution.

To confirm that the solutions are optimal, suppose that some $x_i > 0$; that is, $x_i=1$.  Then there is no slackness available around $v_i$, so $h(i)=0$.  By (\ref{eq:h(i)}), 
\begin{align*}
\sum_{j\sim i} y_j = w_i.
\end{align*}
Now suppose $y_j >0$.  It follows that 
$\sum_{i\sim j} x_i \leq 1$
because the algorithm requires that if $x_i=x_k=1$, then $T_i \cap T_k = \varnothing$.  Combining this with the feasibility requirement that $\sum_{i\sim j} x_i \geq 1$ yields that
\begin{align*}
\sum_{i\sim j} x_i =1.
\end{align*}
Thus both the primal and dual solution are tight and therefore optimal.

See Example  \ref{ex:farbOrig} in  Appendix \ref{ap:farber} for an example of Algorithm \ref{alg:farb} applied to a tree.


\subsection{Extension to Broadcasts and Multipackings} \label{subsec:FarbBroadMP}

Recently, Brewster and Duchesne \cite{BD} extended Farber's original algorithm (Algorithm \ref{alg:farb}) to broadcasts and multipackings.  The original primal solution from the algorithm provided a minimum weight dominating set.  To account for the farther reaching nature of broadcasts, Brewster and Duchesne modified the algorithm to instead search for a minimum weight covering of $k$-neighbourhoods (or balls), where each $k$-neighbourhood had weight (or cost)~$k$.

As Brewster and Duchesne's paper is still being drafted, we have taken some liberties in guessing applicable notation and the exact formulation of Algorithm \ref{alg:BDfarb}.  Recall from Section \ref{sec:intro} that for a graph $G$ with $V(G)=\{v_1,v_2,\dots,v_n\}$, the extended neighbourhood matrix $A$ is the $n \times m$ vertex- multi-neighbourhood incidence matrix of $G$.  The $n$ rows are indexed by the vertices $v_i$ and the $m$ columns are indexed as pairs $(j,k)$ to denote the $k$-neighbourhood of vertex $v_j$.  The entries of $A$ are such that 
\begin{align*} a_{i,(j,k)} =
	\begin{cases}
1 & \text{ if } v_i \in N_k[v_{j}], \\
0 & \text{ otherwise.}		
	\end{cases}
\end{align*}
For convenience, we also extend some of Farber's notation.  We write $ i \sim_m (j,k)$ if either $i=j$ or $v_i \in N_k[v_j]$.

Notice that the weights are assigned to vertex neighbourhoods, rather than the vertices themselves.  The weight of the $k$-neighbourhood of the vertex $v_i$ is denoted $w_{i,k}=k$. Thus, let
\begin{align}\label{eq:h(i,k)} h(i,k)=w_{i,k} - \sum_{j\sim_m (i,k)}y_j = k- \sum_{j\sim_m (i,k)}y_j, \end{align}
and 
\begin{align}T_{i,k}=\{j: j\sim_m (i,k) \text{ and } y_j>0\}.\end{align}

\begin{algo}\label{alg:BDfarb}\emph{\cite{BD}} \ 
\begin{enumerate}[leftmargin=1in]
	\item[\emph{\textbf{Input:}}]  A weighted graph with $G=(V,E)$ with strong elimination order $v_1, v_2,\dots, v_n$.
	\item[\emph{\textbf{Output:}}] Optimal solutions to  BIP(G) and MIP(G).
	\item[\emph{\textbf{Initial:}}] Set $T=\{1,2,\dots,n\}$ and each $y_j=0$ and $x_{i,k}=0$.
	\item[\emph{\textbf{Step 1:}}]  For each $j=1,\dots, n$, set $y_j \leftarrow \min\{h(i,k): j \sim_m (i,k)\}$.
	\item[\emph{\textbf{Step 2:}}]  For $(i,k)$ in descending lexicographic order, if $h(i,k)=0$ and $T_{i,k} \subset T$, then set $x_{i,k}\leftarrow 1$ and $T \leftarrow T-T_{i,k}$.	
\end{enumerate} 
\end{algo}

For each $v_i$, we could consider each of its $k$-neighbourhoods for $k=1,\dots, e(v)$; however, in an optimal broadcast setting, we can ignore certain neighbourhoods that we know will never be selected in a minimum cost broadcast.  For example, if $v_{\ell}$ is a leaf of a tree with $e(v_{\ell}) \geq 2$, there is no incentive to define a broadcast $f$ with $f(v_{\ell}) =  e(v_{\ell})$; if $v$ is the stem of $v_{\ell}$ we can always cover at least as many vertices with a broadcast $g(v)=e(v_{\ell})$.  Thus, we can safely remove some neighbourhoods from $A$.  It is likely that exactly which neighbourhoods are removable is dependent upon each class of graph.

To apply the algorithm to a tree $T$, Brewster and Duchesne give the following construction of a specifically ordered extended neighbourhood matrix $M$.  We provide an example of Algorithm \ref{alg:BDfarb} being applied to a tree in Example \ref{ex:BDfarb} in Appendix \ref{ap:farber}.

\noindent\textbf{Construction of $M$:} Given any tree $T$, root $T$ at a central vertex.  For each $v \in V(T)$, let $\ell(v)$ be the maximum distance to a leaf below $v$ in $T$.  For each non-leaf vertex $v$, construct a series of balls of radius $1,2,\dots \ell(v)$ centred at $v$.  Define $M$ to be the resulting vertex-ball incidence matrix, with $n$ rows sorted in descending (rooted) level order, and $m$ columns sorted left to right in ascending lexicographic order read from the bottom up.

\begin{prop}\label{prop:GamFreeT} \emph{\cite{BD}}  The resulting matrix $M$ is $\Gamma$-free.
\end{prop}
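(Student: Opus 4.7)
The plan is to argue by contradiction. Suppose $M$ contains the $\Gamma$-pattern as a submatrix, so there exist row indices $i_1 < i_2$ and column indices (balls) $(v_{j_1},k_1)$ appearing before $(v_{j_2},k_2)$ in the column order with $M_{i_1,(j_1,k_1)}=M_{i_1,(j_2,k_2)}=M_{i_2,(j_1,k_1)}=1$ and $M_{i_2,(j_2,k_2)}=0$. Unpacking the prescribed orderings: in the rooted tree $T$, the vertex $v_{i_1}$ has depth at least that of $v_{i_2}$, and either $v_{j_1}=v_{j_2}$ with $k_1<k_2$ or $v_{j_1}$ is strictly deeper than $v_{j_2}$. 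The entries translate into the four inequalities $d(v_{i_1},v_{j_1})\le k_1$, $d(v_{i_1},v_{j_2})\le k_2$, $d(v_{i_2},v_{j_1})\le k_1$, and $d(v_{i_2},v_{j_2})>k_2$; my aim is to show that no such configuration can arise.

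The sub-case $v_{j_1}=v_{j_2}$ (with $k_1<k_2$) is handled immediately by ball nesting: $N_{k_1}[v_{j_1}]\subseteq N_{k_2}[v_{j_2}]$, so $v_{i_2}\in N_{k_1}[v_{j_1}]$ forces $v_{i_2}\in N_{k_2}[v_{j_2}]$, contradicting $d(v_{i_2},v_{j_2})>k_2$.

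For the principal sub-case (with $v_{j_1}$ strictly deeper than $v_{j_2}$), I would translate everything into depths of lowest common ancestors via the rooted-tree metric $d(u,v)=\operatorname{depth}(u)+\operatorname{depth}(v)-2\operatorname{depth}(\operatorname{lca}(u,v))$. Subtracting the second distance inequality from the fourth and invoking $\operatorname{depth}(v_{i_1})\ge\operatorname{depth}(v_{i_2})$ yields
\[
\operatorname{depth}(\operatorname{lca}(v_{i_1},v_{j_2})) \;>\; \operatorname{depth}(\operatorname{lca}(v_{i_2},v_{j_2})).
\]
Applying the four-point condition for tree metrics to the triple $\{v_{i_1},v_{i_2},v_{j_2}\}$, this strict inequality forces $\operatorname{lca}(v_{i_1},v_{i_2})=\operatorname{lca}(v_{i_2},v_{j_2})=:L''$, placing $v_{i_2}$ in one sub-branch of $L''$ and both $v_{i_1}$ and $v_{j_2}$ in a single, different sub-branch of $L''$ (their own common ancestor lying strictly below $L''$).

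With this structure in hand, it remains to use the remaining inequalities $d(v_{i_1},v_{j_1})\le k_1$ and $d(v_{i_2},v_{j_1})\le k_1$, the depth inequality $\operatorname{depth}(v_{j_1})>\operatorname{depth}(v_{j_2})$, and the construction's constraint $k_1\le\ell(v_{j_1})$ to rule out every possible location of $v_{j_1}$ relative to $L''$: either in the sub-branch of $L''$ containing $v_{i_2}$, in the sub-branch containing $v_{i_1}$ and $v_{j_2}$ (with further splits depending on which side of $\operatorname{lca}(v_{i_1},v_{j_2})$ the vertex $v_{j_1}$ sits), or as an ancestor of $L''$. The last case is immediately eliminated because it would force $\operatorname{depth}(v_{j_1})\le\operatorname{depth}(L'')\le\operatorname{depth}(v_{j_2})$, contradicting $v_{j_1}$ being strictly deeper than $v_{j_2}$; each of the other cases produces a contradiction with one of the remaining distance inequalities once the $\ell$-bound is invoked. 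The main obstacle is organizing this final case analysis cleanly, since the geometry involves several LCAs simultaneously and the $\ell$-bound must be deployed exactly at the point where a naive distance estimate would otherwise fall just short of a contradiction.
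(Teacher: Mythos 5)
Your argument breaks at the very first step: the interpretation of the column order. The construction sorts the columns of $M$ in ascending lexicographic order of the column \emph{vectors}, read from the bottom row upward (a doubly lexical ordering); it does not sort them by depth of the ball centre. Your substitute premise --- that $(v_{j_1},k_1)$ preceding $(v_{j_2},k_2)$ forces either $v_{j_1}=v_{j_2}$ with $k_1<k_2$, or $v_{j_1}$ strictly deeper than $v_{j_2}$ --- is false for the actual construction (in Example~\ref{ex:BDfarb} the column of $N_1[v_9]$ precedes that of $N_2[v_7]$ even though $v_9$ is the \emph{shallower} centre), and, worse, the proposition fails under your ordering, so no case analysis can rescue it. Concretely, let $T=P_{11}$ with consecutive vertices $y_5,y_4,y_3,y_2,y_1,r,c,b,w_1,w_2,w_3$, rooted at its centre $r$. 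Then $\ell(c)=4$ and $\ell(b)=3$, so the balls $N_1[c]$ and $N_3[b]$ are both constructed. Under ``deeper centre first'' the column of $N_3[b]$ (centre at depth $2$) precedes that of $N_1[c]$ (centre at depth $1$), and the row of $b$ (depth $2$) precedes that of $y_1$ (depth $1$); since $b\in N_3[b]\cap N_1[c]$, $y_1\in N_3[b]$, and $y_1\notin N_1[c]$, these two rows and two columns form exactly $\Gamma$, while every one of your hypotheses ($d(b,b)\le 3$, $d(b,c)\le 1$, $d(y_1,b)\le 3$, $d(y_1,c)>1$, $k_1\le\ell(b)$, $k_2\le\ell(c)$, depth comparisons) is satisfied. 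Under the true ordering there is no $\Gamma$ here, because $N_1[c]\subsetneq N_3[b]$ forces the column of $N_1[c]$ to the left.

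What the lexicographic column order actually provides --- and what the paper's proof is built on --- is a witness vertex: if ball $B$ precedes ball $A$ and some $y$ lies in $B\setminus A$, then there must exist $x\in A\setminus B$ (otherwise $A\subseteq B$ and $A$'s column would be lexicographically smaller and hence to the left), and taking $x$ to be the bottom-most row on which the two columns differ makes $x$ no deeper than $y$ and $z$. The contradiction is then reached by a lowest-common-ancestor analysis on $x$, $y$, $z$ and the two ball centres, concluding $x\in B$ after all; the bound $k\le\ell(v)$ plays no role. Your proposal never produces such an $x$, and in addition the decisive step --- ruling out ``every possible location of $v_{j_1}$'' --- is only asserted, not carried out. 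Your preliminary reductions (the nested-ball sub-case, and the deduction that $\operatorname{lca}(v_{i_1},v_{j_2})$ is strictly deeper than $\operatorname{lca}(v_{i_2},v_{j_2})=\operatorname{lca}(v_{i_1},v_{i_2})$) are correct and in the spirit of the paper's argument, but the proof cannot be completed along the route you describe.
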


\begin{proof}
	Suppose to the contrary that $M$ contains a $\Gamma$-submatrix.  Then there exist two balls $B$ and $A$, and two vertices $z$ and $y$ such that $z \in A \cap B$, $y\in B$ and $y\notin A$, as shown in the vertex-expanded neighbourhood submatrix below.  Since the columns of $M$ are sorted lexicographically, there exists some other vertex $x$, such that $x\in A$ but $x\notin B$; otherwise, the column representing $A$ would be to the left of the column representing $B$.
\begin{table}[H] \centering
\begin{tabular}{c|cc}
		& $B$ & $A$ \\
		\hline
$z$	& 1		&	1	\\
$y$	& 1		&	0	\\
$x$	& 0		&	1	\\
\end{tabular} \label{tab:GamTree}
\end{table}

\noindent Let $a$ be the centre vertex of ball $A$, $b$ be the centre vertex of ball $B$, and  $w$ be the least common ancestor of $z$ and $y$.  \\
\noindent \textbf{Case 1:} $w$ is on the $z-x$ path.  Then $d(z,x) = d(z,w)+d(w,x)$.
Since the rows were sorted by decreasing depth, the depth of $z$ in $T$ is at least that of $x$ and $y$. Hence $d(w,y) \leq d(w,z)$.  Notice that $a$ is not the $y-w$ path, since otherwise $d(a,y) \leq d(a,z)$ which implies that $y\in A$.  Suppose that $w$ is on the $a-z$ path.  Then
\begin{align*}
	d(a,y) = d(a,w) + d(w,y) \leq d(a,w) + d(w,z) = d(a,z). 
\end{align*}
Since $z \in  A$ and $d(a,y) \leq d(a,z)$, this implies that $y \in A$, a contradiction.  Thus $a$ and $z$ share the same child of $w$ as an ancestor, and $w$ is on both the $a-x$ and $a-y$ paths.  Recall that $x\in A$ and $y\notin A$, and so $d(a,x)<d(a,y)$, which implies
\begin{align*}
d(a,w) + d(w,x) = d(a,x) < d(a,y) = d(a,w)+d(w,y).
\end{align*}
It follows that $d(w,x) < d(w,y) \leq d(w,z)$.

By a similar argument (substituting $b$ and $x$ for $a$ and $y$, respectively), we conclude that $w$ is not on the $b-z$ path, but is on the $b-x$ and $b-y$ paths. Thus,
\begin{align*}
	d(b,x) = d(b,w)+d(w,x) < d(b,w)+d(w,y) = d(b,y).
\end{align*}
Finally, since $d(b,x)<d(b,y)$ and $y\in B$, it follows that $x\in B$, a contradiction.

\noindent \textbf{Case 2:} $w$ is not on the $z-x$ path.  Let $v$ be the lowest common ancestor of $x$ and $z$.  A similar argument  to Case 1 using $v$ in place of $w$ again implies that $x\in B$ and forms the desired contradiction.
\end{proof}

Forthcoming work by Brewster and Duchesne will demonstrate how this result provides a nice alternative proof to Theorem \ref{th:mpTree}.  Furthermore, Proposition \ref{prop:GamFreeT} demonstrates that both a minimum broadcast and a maximum multipacking can be found in $O(n+m)$ time, where $n$ is the number of vertices and $m$ is the width of the matrix $M$ constructed above Proposition \ref{prop:GamFreeT}.

In a currently unpublished work, Brewster, MacGillivray and Yang \cite{BMY} extend this result to show that the extended neighbourhood matrix of a graph $G$ is $\Gamma$-free if and only if $G$ is strongly chordal.  Although this implies that Algorithm \ref{alg:BDfarb} can only be applied to strongly chordal graphs, this does not complete the class of graphs with $\gamma_b=\mlp$.  Recall for example that $\gb(C_6)=\mlp(C_6)$, but $C_6$ is not chordal.     

\section{Conclusions}
\label{sec:conc}
Having examined some of the main results in the very young study of multipackings and broadcasts in graphs, we conclude our survey with some open problems.

\subsection{Open Problems}

\begin{problem} \emph{\cite{BMT}}  For which graphs $G$ is $\gamma_b(G) = \mlp (G)$?
\end{problem}

\begin{problem}\emph{\cite{HartMyn}} Does there exist a graph $G$ with $\mlp(G) =3$ and $\gamma_b(G)  \geq 5$?  
\end{problem}

\begin{problem}\emph{\cite{HartMyn}} Can the ratio $\gamma_b / \mlp< 3$ be improved for general graphs?
\end{problem}

\begin{problem} Does there exist a graph $G$ with integral $\gamma_{b,f}(G)$ such that \begin{align*}\mlp(G)<\gamma_{b,f}(G) < \gamma_b(G)?\end{align*}
\end{problem}

\begin{problem}
In \cite{Farber}, Farber modified his algorithm to find minimum independent dominating sets of strongly chordal graphs.  Dunbar et al. \cite{DEHHH} define a broadcast $f$ to be \emph{independent} if for each $v \in V_f^+$, $N_f[v] \cap V_f^+ = \{v\}$, or equivalently $|\{u \in V_f^+: d(u,v) \leq f(u)\}|=1$.  Thus if a broadcast is independent, then each broadcast vertex hears only the broadcast from itself.  Can Farber's algorithm for independent dominating sets be extended to independent dominating broadcasts?  Furthermore, what is the dual parameter to the independent broadcast number?
\end{problem}

\begin{problem}
	Can the minimum cost broadcast and maximum multipacking problems be formulated as hypergraph transversal and matching problems? 
\end{problem}

\begin{problem}
	A \emph{clutter} is a hypergraph with no nested edges.  As detailed in \cite{Corn}, clutters have been extensively researched in an optimization context.  In general, the extended neighbourhood hypergraph is not a clutter, since it has many nested edges; however, the hypergraph whose edges are the broadcast neighbourhood of an efficient broadcast is a trivial clutter.  Is there a meaningful way to interpret the minimum cost broadcast or maximum multipacking problem in terms of clutters?
\end{problem}

\appendix
\section{Examples}

\subsection{Farber's Algorithm} \label{ap:farber}

\begin{example}\label{ex:farbOrig} Using Farber's original algorithm (Algorithm \ref{alg:farb}), we find a maximum 2-packing and minimum dominating set for the graph $G$ in Figure \ref{fig:exFarb}.
\begin{figure}[H]
			\centering
				\begin{tikzpicture}
				
					\node [std](a) at (0,0)[label=left:$v_1$]{};
					\node [std](b) at (1,0)[label=right:$v_2$] {};
					\node [std](c) at (2,0.5)[label=right:$v_3$] {};
					\node [std](d) at (0,0.5)[label=left:$v_4$] {};
					\node [std](e) at (1,0.5)[label=right:$v_5$] {};
					\node [std](f) at (2,1)[label=right:$v_6$] {};
					\node [std](g) at (0.5,1)[label=left:$v_7$] {};
					\node [std](h) at (2,1.5)[label=right:$v_8$] {};
					\node [std](i) at (0.5,1.5)[label=left:$v_9$] {};	
					\node [std](j) at (1.25,2)[label=above:$v_{10}$] {};							
			
				\draw(a)--(d)--(g)--(i)--(j)--(h)--(f)--(c);
				\draw(b)--(e)--(g);			
				
			\end{tikzpicture}	
				\caption{A tree $G$ with strong elimination order $v_1,v_2,v_3,v_4,v_5,v_6,v_7,v_8,v_9,v_{10}$.}
	\label{fig:exFarb}
\end{figure}
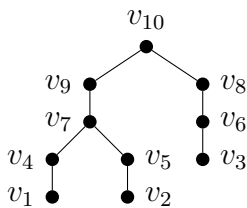

\begin{table}[H]
	\centering
\begin{tabular}{c|cccccccccc}
			&	$N[v_1]$	&	$N[v_2]$	&	$N[v_3]$	&	$N[v_4]$	&	$N[v_5]$	&	$N[v_6]$	&	$N[v_7]$	&	$N[v_8]$	&	$N[v_9]$	&	$N[v_{10}]$	\\

	\hline
$	v_1	$	&	1	&	0	&	0	&	1	&	0	&	0	&	0	&	0	&	0	&	0	\\
$	v_2	$	&	0	&	1	&	0	&	0	&	1	&	0	&	0	&	0	&	0	&	0	\\
$	v_3	$	&	0	&	0	&	1	&	0	&	0	&	1	&	0	&	0	&	0	&	0	\\
$	v_4	$	&	0	&	0	&	0	&	1	&	0	&	0	&	1	&	0	&	0	&	0	\\
$	v_5	$	&	0	&	0	&	0	&	0	&	1	&	0	&	0	&	0	&	0	&	0	\\
$	v_6	$	&	0	&	0	&	0	&	0	&	0	&	1	&	0	&	1	&	0	&	0	\\
$	v_7	$	&	0	&	0	&	0	&	1	&	1	&	0	&	1	&	0	&	1	&	0	\\
$	v_8	$	&	0	&	0	&	0	&	0	&	0	&	1	&	0	&	1	&	0	&	1	\\
$	v_9	$	&	0	&	0	&	0	&	0	&	0	&	0	&	1	&	0	&	1	&	1	\\
$	v_{10}	$	&	0	&	0	&	0	&	0	&	0	&	0	&	0	&	1	&	1	&	1	\\
\end{tabular} \caption{The neighbourhood matrix of $T$}
\end{table}

\begin{enumerate}[leftmargin=1in]

\item[\textbf{Initial:}] $x_v=y_v=0$ for all $v\in V(G)$, $T=\{1,2,3,4,5,6,7,8,9,10\}$.

\begin{table}[H]
	\centering
\begin{tabular}{c|cccccccccc}
$	v_i	$	&	$v_1$	&	$v_2$	&	$v_3$	&	$v_4$	&	$v_5$	&	$v_6$	&	$v_7$	&	$v_8$	&	$v_9$	&	$v_{10}$	\\ \hline
$	h(i)	$	&	1	&	1	&	1	&	1	&	1	&	1	&	1	&	1	&	1	&	1	
\end{tabular} 
\end{table}

\item[\textbf{Stage 1:}] Scan rows in descending order.
\begin{itemize}
 \item $h(i) > 0$ for all $i \sim 1$. 	\\
 			 Update: $y_1=1$.  
				\begin{table}[H] \centering  \begin{tabular}{c|cccccccccc}
				$	v_i	$	&	$v_1$	&	$v_2$	&	$v_3$	&	$v_4$	&	$v_5$	&	$v_6$	&	$v_7$	&	$v_8$	&	$v_9$	&	$v_{10}$	\\ \hline
				$	h(i)	$	&	0	&	1	&	1	&	0	&	1	&	1	&	1	&	1	&	1	&	1	
					\end{tabular}  					\end{table}
					
	 \item  $h(i) > 0$ for all $i  \sim 2$. 	\\ 
	 			Update: $y_2=1$.  
				\begin{table}[H] \centering  \begin{tabular}{c|cccccccccc}
		$	v_i	$	&	$v_1$	&	$v_2$	&	$v_3$	&	$v_4$	&	$v_5$	&	$v_6$	&	$v_7$	&	$v_8$	&	$v_9$	&	$v_{10}$	\\ \hline
				$	h(i)$	&	0	&	0&	1	&	0	&	0	&	1	&	1	&	1	&	1	&	1	
					\end{tabular}  					\end{table}
					
		\item 	 $h(i) > 0$ for all $i \sim 3$. 	\\ 
				Update: $y_3=1$.  
				\begin{table}[H] \centering  \begin{tabular}{c|cccccccccc}
				$	v_i	$	&	$v_1$	&	$v_2$	&	$v_3$	&	$v_4$	&	$v_5$	&	$v_6$	&	$v_7$	&	$v_8$	&	$v_9$	&	$v_{10}$	\\ \hline
				$	h(i)$	&	0	&	0& 	0 &	0	&	0	&	0	&	1	&	1	&	1	&	1	
		\end{tabular}  					\end{table}

		\item $h(4)=0$ and $4\sim 4$.   	\\ 
				Keep: $y_4=0$.  							\\
				
			\item \emph{Similarly for $v_5,v_6,v_7,v_8$}. \\

			\item $h(i) > 0$ for all $i \sim 9$. 	\\ 
			 Update: $y_9=1$.  
				\begin{table}[H] \centering  \begin{tabular}{c|cccccccccc}
			$	v_i	$	&	$v_1$	&	$v_2$	&	$v_3$	&	$v_4$	&	$v_5$	&	$v_6$	&	$v_7$	&	$v_8$	&	$v_9$	&	$v_{10}$	\\ \hline
				$	h(i)$	&	0	&	0& 	0 &	0	&	0	&	0	&	0	&	1	&0	&	0	
		\end{tabular}  					\end{table}						
			
					\item $h(10)=0$ and $10\sim 10$.   	\\ 
				Keep: $y_{10}=0$.  \\
		\item STOP: All vertices scanned. 		\\
					
\end{itemize}
	
 \item[\textbf{Stage 2:}] Scan neighbourhoods in reverse order.
				\begin{itemize}
					\item $h(10)=0$ and $T_{10} = \{9\} \subseteq T$. \\
							Update: $x_{10}=1$ and $T=\{1,2,3,4,5,6,7,8,10\}$. \\
					\item $h(9)=0$ but $T_9 = \{9\} \nsubseteq T$. \\
							Keep: $x_9=0$. 	\\
					\item $h(8)=1$.
							Keep: $x_8=0$. 	\\		
					\item $h(7)=0$ but $T_7 = \{9\} \nsubseteq T$. \\
							Keep: $x_7=0$. 	\\			
					\item $h(6)=0$ and $T_6 = \{3\} \subseteq T$. \\
							Update: $x_6=1$ and $T=\{1,2,4,5,6,7,8,10\}$. \\	
					\item $h(5)=0$ and $T_5 = \{2\} \subseteq T$. \\
							Update: $x_5=1$ and $T=\{1,4,5,6,7,8,10\}$. \\	
					\item $h(4)=0$ and $T_4 = \{1\} \subseteq T$. \\
							Update: $x_4=1$ and $T=\{4,5,6,7,8,10\}$. \\		
							
					\item STOP: $\sum_{v_i\in V} y_i =	\sum_{v_i\in V} x_i $											
				\end{itemize}
\end{enumerate}
	\noindent Therefore $\{	1,2,3,9\}$ is a maximum 2-packing and $\{4,5,6,10\}$ is a  minimum dominating set of $G$.
					
\end{example}

\begin{example}\label{ex:BDfarb}   Using Brewster and Duchesne's modification of Farber's algorithm (Algorithm \ref{alg:BDfarb}), we find a maximum multipacking and minimum dominating broadcast for the graph $G$ in Figure \ref{fig:exFarb}.  For space, we use the notation $(i,k)$ in place of $N_k[v_i]$. 
\begin{table}[H]
	\centering
	
\begin{tabular}{c|C{5mm}C{5mm}C{5mm}C{5mm}C{5mm}C{5mm}C{5mm}C{5mm}C{5mm}C{5mm}C{5mm}C{5mm}C{5mm}C{5mm}C{5mm}C{5mm}C{5mm}} 
																																																							
$	N_k[v_i]	$	&	\scriptsize	$(1,1)$	&	\scriptsize	$(2,1)$	&	\scriptsize	$(3,1)$	&	\scriptsize	$(4,1)$	&	\scriptsize	$(5,1)$	&	\scriptsize	$(6,1)$	&	\scriptsize	$(7,1)$	&	\scriptsize	$(8,1)$	&	\scriptsize	$(9,1)$	&	\scriptsize	$(7,2)$	&	\scriptsize	$(10,1)$	&	\scriptsize	$(8,2)$	&	\scriptsize	$(9,2)$	&	\scriptsize	$(10,2)$	&	\scriptsize	$(9,3)$	&	\scriptsize	$(10,3)$	&	\scriptsize	$(10,4)$	\\	\hline
$	v_1	$	&	1	&	0	&	0	&	1	&	0	&	0	&	0	&	0	&	0	&	1	&	0	&	0	&	0	&		0	&		1	&		0	&		1	\\	
$	v_2	$	&	0	&	1	&	0	&	0	&	1	&	0	&	0	&	0	&	0	&	1	&	0	&	0	&	0	&		0	&		1	&		0	&		1	\\	
$	v_3	$	&	0	&	0	&	1	&	0	&	0	&	1	&	0	&	0	&	0	&	0	&	0	&	1	&	0	&		0	&		0	&		1	&		1	\\	
$	v_4	$	&	1	&	0	&	0	&	1	&	0	&	0	&	1	&	0	&	0	&	1	&	0	&	0	&	1	&		0	&		1	&		1	&		1	\\	
$	v_5	$	&	0	&	1	&	0	&	0	&	1	&	0	&	1	&	0	&	0	&	1	&	0	&	0	&	1	&		0	&		1	&		1	&		1	\\	
$	v_6	$	&	0	&	0	&	1	&	0	&	0	&	1	&	0	&	1	&	0	&	0	&	0	&	1	&	0	&		1	&		1	&		1	&		1	\\	
$	v_7	$	&	0	&	0	&	0	&	1	&	1	&	0	&	1	&	0	&	1	&	1	&	0	&	0	&	1	&		1	&		1	&		1	&		1	\\	
$	v_8	$	&	0	&	0	&	0	&	0	&	0	&	1	&	0	&	1	&	0	&	0	&	1	&	1	&	1	&		1	&		1	&		1	&		1	\\	
$	v_9	$	&	0	&	0	&	0	&	0	&	0	&	0	&	1	&	0	&	1	&	1	&	1	&	1	&	1	&		1	&		1	&		1	&		1	\\	
$	v_{10}	$	&	0	&	0	&	0	&	0	&	0	&	0	&	0	&	1	&	1	&	1	&	1	&	1	&		1	&		1	&		1	&		1	&		1	\\				
\end{tabular}
 \caption{The extended neighbourhood matrix of $T$}
\end{table}

\begin{enumerate}[leftmargin=1in]

\item[\textbf{Initial:}] $y_i=0$ and $x_{i,k}=0$ for all $v_i\in V(G)$ \\ $T=\{1,2,3,4,5,6,7,8,9,10\}$.

\begin{table}[H]
	\centering
\begin{tabular}{c|C{5mm}C{5mm}C{5mm}C{5mm}C{5mm}C{5mm}C{5mm}C{5mm}C{5mm}C{5mm}C{5mm}C{5mm}C{5mm}C{5mm}C{5mm}C{5mm}C{5mm}} 
$	N_k[v_i]	$	&	\scriptsize	$(1,1)$	&	\scriptsize	$(2,1)$	&	\scriptsize	$(3,1)$	&	\scriptsize	$(4,1)$	&	\scriptsize	$(5,1)$	&	\scriptsize	$(6,1)$	&	\scriptsize	$(7,1)$	&	\scriptsize	$(8,1)$	&	\scriptsize	$(9,1)$	&	\scriptsize	$(7,2)$	&	\scriptsize	$(10,1)$	&	\scriptsize	$(8,2)$	&	\scriptsize	$(9,2)$	&	\scriptsize	$(10,2)$	&	\scriptsize	$(9,3)$	&	\scriptsize	$(10,3)$	&	\scriptsize	$(10,4)$	\\	\hline
$	h(i,k)	$	&	1	&	1	&	1	&	1	&	1	&	1	&	1	&	1	&		1	&		2	&		1	&		2	&		2	&		2	&		3	&		3	&		4			
\end{tabular} 
\end{table}

\item[\textbf{Stage 1:}] Scan rows in descending order.

\begin{itemize}

 \item $h(i,k) > 0$ for all $h(i,k)$ such that $ 1 \sim_m (i,k)$. 	\\
 			 Update: $y_1=1$.  
\begin{table}[H]
	\centering 			 
\begin{tabular}{c|C{5mm}C{5mm}C{5mm}C{5mm}C{5mm}C{5mm}C{5mm}C{5mm}C{5mm}C{5mm}C{5mm}C{5mm}C{5mm}C{5mm}C{5mm}C{5mm}C{5mm}} 
$	N_k[v_i]	$	&	\scriptsize	$(1,1)$	&	\scriptsize	$(2,1)$	&	\scriptsize	$(3,1)$	&	\scriptsize	$(4,1)$	&	\scriptsize	$(5,1)$	&	\scriptsize	$(6,1)$	&	\scriptsize	$(7,1)$	&	\scriptsize	$(8,1)$	&	\scriptsize	$(9,1)$	&	\scriptsize	$(7,2)$	&	\scriptsize	$(10,1)$	&	\scriptsize	$(8,2)$	&	\scriptsize	$(9,2)$	&	\scriptsize	$(10,2)$	&	\scriptsize	$(9,3)$	&	\scriptsize	$(10,3)$	&	\scriptsize	$(10,4)$	\\	\hline
$	h(i,k)	$	&		0	&		1	&		1	&		0	&		1	&		1	&		1	&		1	&		1	&		1	&		1	&		2	&		2	&		2	&		2	&		3	&		3			\end{tabular} 
\end{table}

 \item $h(i,k) > 0$ for all $h(i,k)$ such that $ 2 \sim_m (i,k)$.  	\\
 			 Update: $y_2=1$.  
\begin{table}[H]
	\centering
\begin{tabular}{c|C{5mm}C{5mm}C{5mm}C{5mm}C{5mm}C{5mm}C{5mm}C{5mm}C{5mm}C{5mm}C{5mm}C{5mm}C{5mm}C{5mm}C{5mm}C{5mm}C{5mm}} 
$	N_k[v_i]	$	&	\scriptsize	$(1,1)$	&	\scriptsize	$(2,1)$	&	\scriptsize	$(3,1)$	&	\scriptsize	$(4,1)$	&	\scriptsize	$(5,1)$	&	\scriptsize	$(6,1)$	&	\scriptsize	$(7,1)$	&	\scriptsize	$(8,1)$	&	\scriptsize	$(9,1)$	&	\scriptsize	$(7,2)$	&	\scriptsize	$(10,1)$	&	\scriptsize	$(8,2)$	&	\scriptsize	$(9,2)$	&	\scriptsize	$(10,2)$	&	\scriptsize	$(9,3)$	&	\scriptsize	$(10,3)$	&	\scriptsize	$(10,4)$	\\	\hline
$	h(i,k)	$	&		0	&		0	&		1	&		0	&		0	&		1	&		1	&		1	&		1	&		0	&		1	&		2	&		2	&		2	&		1	&		3	&		2				
\end{tabular} 
\end{table}

 \item $h(i,k) > 0$ for all $h(i,k)$ such that $ 2 \sim_m (i,k)$.  	\\
  			 Update: $y_3=1$.  
\begin{table}[H]
	\centering
\begin{tabular}{c|C{5mm}C{5mm}C{5mm}C{5mm}C{5mm}C{5mm}C{5mm}C{5mm}C{5mm}C{5mm}C{5mm}C{5mm}C{5mm}C{5mm}C{5mm}C{5mm}C{5mm}} 
$	N_k[v_i]	$	&	\scriptsize	$(1,1)$	&	\scriptsize	$(2,1)$	&	\scriptsize	$(3,1)$	&	\scriptsize	$(4,1)$	&	\scriptsize	$(5,1)$	&	\scriptsize	$(6,1)$	&	\scriptsize	$(7,1)$	&	\scriptsize	$(8,1)$	&	\scriptsize	$(9,1)$	&	\scriptsize	$(7,2)$	&	\scriptsize	$(10,1)$	&	\scriptsize	$(8,2)$	&	\scriptsize	$(9,2)$	&	\scriptsize	$(10,2)$	&	\scriptsize	$(9,3)$	&	\scriptsize	$(10,3)$	&	\scriptsize	$(10,4)$	\\	\hline
$	h(i,k)	$	&		0	&		0	&		0	&		0	&		0	&		0	&		1	&		1	&		1	&		0	&		1	&		1	&		2	&		2	&		1	&		2	&		1		
\end{tabular} 
\end{table}

 \item $h(1,1) = 0$,	\\
 			 Keep: $y_4=0$.  \\

 \item \emph{Similarly for }$5,6,7,8,9,10$.

		\item STOP: All vertices scanned. 		\\
					
\end{itemize}
	
 \item[\textbf{Stage 2:}] Scan neighbourhoods in reverse order.
				\begin{itemize}
					\item $h(10,4)>0$.  Keep $x_{10,4}=0$. \\
					\item \emph{Similarly for }$(10,3),(9,3),(10,2),(9,2),(8,2),(10,1)$. \\
					
					\item $h(7,2)=0$ and $T_{7,2} = \{1,2\} \subseteq T$. \\
							Update: $x_{7,2}=1$ and $T=\{3,4,5,6,7,8,9,10\}$. \\		
					
					\item $h(9,1)>0$.  Keep $x_{9,1}=0$. \\
					\item \emph{Similarly for }$(8,1),(7,1)$. \\
		
					\item $h(6,1)=0$ and $T_{6,1} = \{3\} \subseteq T$. \\
							Update: $x_{6,1}=1$ and $T=\{4,5,6,7,8,9,10\}$. \\		
									
					\item STOP: $\sum_{v_i\in V} y_i = 	\sum_{v_i\in V, k} x_{i,k} $																																
									
				\end{itemize}
\end{enumerate}
	\noindent Therefore $\{	v_1,v_2, v_3\}$ is a maximum 2-packing and the broadcast 
	\begin{align*}
		f(v_i) = \begin{cases}
		2 & \text{ for } i=7\\
		1 & \text{ for } i=6 \\
		0 & \text{ otherwise,}\\
		\end{cases}
	\end{align*}
	 is a  minimum dominating broadcast of $G$.

\end{example}


\subsection{The Tree Multipacking Algorithm} \label{ap:mpTree}

In this section, we provide the original algorithm from \cite{LThesis} for finding a maximum multipacking of a tree.  Before presenting said algorithm, we first supply the necessary definitions and notations.

Let $P:v_{0},...,v_{d}$ be a diametrical
path of a tree $T$ with $\operatorname{diam}(T)=d$. For each $v_{i}\in V(P)$,
let $U_{i}$ be the set of all vertices of $T$ that are connected to $v_{i}$ by
a (possibly trivial) path internally disjoint from $P$. Let $u_{i}$ be a
vertex in $U_{i}$ at maximum distance from $v_{i}$, and let $B_{i}$ be the
$v_{i}-u_{i}$ path. The \emph{shadow tree} $S_{T,P}$ \emph{of }$T$ \emph{with
respect to} $P$ is the subtree of $T$ induced by $\bigcup_{i=0}^{d}V(B_{i})$.
If $T\cong S_{T,P}$ for some diametrical path $P$ of $T$, then $T$ is also
called a \emph{shadow tree}. Note that a shadow tree has maximum degree at
most three.

Consider a shadow tree $S_{T,P}$. If $U_{i}-\{v_{i}\}\neq\varnothing$, we call
$v_{i}$ a \emph{branch vertex} and the $v_{i}-u_{i}$ path $B_{i}$ a
\emph{branch}. Furthermore, for  $\alpha_{i}=d(v_{i},u_{i})\geq1$, the tree
$\Delta_{i}$ induced by $\left\{  v_{i-\alpha_{i}},\dots,v_{i-1}\right\}  \cup
V(B_{i})\cup\left\{  v_{i+1},\dots,v_{i+\alpha_{i}}\right\}  $ is called the
\emph{triangle at} $i$. If the vertex subset $\{v_{i-\alpha_{i}},\dots
,v_{i},\dots,v_{i+\alpha_{i}}\}$ of the triangle $\Delta_{i}$ is contained in
the vertex subset $\{v_{j-\alpha_{j}},\dots,v_{j},\dots,v_{j+\alpha_{j}}\}$ of
the triangle $\Delta_{j}$, then $\Delta_{i}$ is called a \emph{nested
triangle}. 
A \emph{free edge} is an edge of $S_{T,P}$ that is not in any
triangle; note that all free edges of $S_{T,P}$ lie on $P$.

The triangles of  $S_{T,P}$ are labeled in order of their occurrence on $P$ and are denoted  $\Delta _{i_1},\Delta_{i_2},\dots,\Delta_{i_c}$.  For simplicity, we abuse notation and denote $\Delta _{i_1}$ as $\Delta _{1}$, and $\Delta _{i_c}$ as  $\Delta _{c}$. A free edge on $P$ that comes before
$\Delta_{1}$ is called a \emph{leading free edge}; likewise, a free edge that
comes after $\Delta_{c}$ is called a \emph{trailing free edge}. If $e$ is a
free edge of $S_{T,P}$, we also call $e$ a \emph{free edge of }$T$\emph{\ with
respect to} $P$.  A set $M$ of edges of the diametrical path $P$ of the tree $T$ is a
\emph{split-}$P$ \emph{set} if each component $T^{\prime}$ of $T-M$ has a
positive even diameter and $P^{\prime}=T^{\prime}\cap P$ is a diametrical path
of $T^{\prime}$. A \emph{split-set} of $T$ is a split-$P$ set for some
diametrical path $P$ of $T$. An edge in any split-set of $T$ is a
\emph{split-edge}.  
The requirement that $P^{\prime}=T^{\prime}\cap P$ be a
diametrical path of $T^{\prime}$ implies that each split-edge is a free edge.
However, not all free edges are split-edges.

\begin{figure}[H]
\centering
\begin{tikzpicture}					
					\foreach \i/\x in {0/0, 1/0.5,2/1,3/1.5, 4/2, 5/2.5, 6/3, 7/3.5, 8/4, 9/4.5, 10/5, 11/5.5, 12/6, 13/6.5}
							\node [std](x\i) at (\x,1) {};		
					
					\foreach \f/\t in {0/1,1/2,2/3,3/4,4/5,5/6,6/7,7/8,8/9,9/10,10/11,11/12,12/13}
						\draw (x\f) -- (x\t);
										
						\foreach \i/\y in	{0/1, 1/1.5,2/2}
							\node [std](y\i) at (1,\y) {};	
							
						\foreach \f/\t in {0/1,1/2}
							\draw (y\f) -- (y\t);
						
						\draw[tr] (x0)--(y2)--(x4);		
					
					\foreach \i/\c in {0/1, 1/1.5,2/2,3/2.5, 4/3}
							\node [std](c\i) at (4,\c) {};		
					
					\foreach \f/\t in {0/1,1/2,2/3,3/4}
						\draw (c\f) -- (c\t);					
					
					\draw[tr] (x4)--(c4)--(x12);		
					
				\node[below=.2cm of x0][color=black](v0){$v_0$};		
				\node[below=.2cm of x2][color=black](v2){$v_2$};	
				\node[below=.2cm of x8][color=black](vc){$v_c$};	
				\node[below=.2cm of x13][color=black](vd){$v_d$};	
				\node[right=.2cm of c1][color=black](uc1){$u_{c,1}$};	
				\node[right=.2cm of c4][color=black](uca){$u_{c,\alpha}$};	
				
				\node[draw=ProcessBlue, thick, rounded corners, dashed, fit=(x8) (x13)] (Qc) {};
				\node[right=.2cm of x13][color=ProcessBlue](Q){$Q_c$};	
				
				\node[draw=Plum, thick, rounded corners, dashed, fit=(x8) (c4)] (Bc) {};
				\node[above=.2cm of c4][color=Plum](B){$B_c$};		
				
				\node (hend) at (7.3,1.05) {};	
				\node (hstart) at (0,0.95) {};	
				\node[draw= BurntOrange, thick, rounded corners, dashed, fit=(hend) (hstart)] (Pb) {};
				\node[left=.2cm of x0][color=BurntOrange](P){$P$};		
								
				\draw[color=Green, line width = 2pt] (x4)--(x5);
				\node[below right =.2cm and .3cm of x3][color=Green](ef){$e_f$};	
				
				\draw[color=Green, line width = 2pt] (x11)--(x12);
				\node[below right =.2cm and .3cm of x10][color=Green](el){$e_l$};				
										
		\end{tikzpicture}
\caption{A labelled shadow tree $S_{T,P}$.}
\label{fig:definitions}
\end{figure}
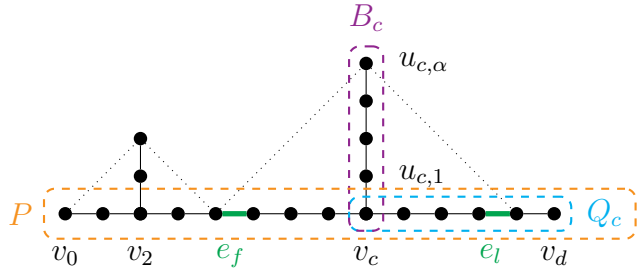

We illustrate the following notation in Figure \ref{fig:definitions}. Let $c$
be the highest index such that $v_{c}$ is a branch vertex of $T$. The
subpath $Q_{c}:v_{c},...,v_{d}$ of $P$ is called the \emph{trailing endpath}
of $T$. The branch of $T$ that starts at $v_{c}$ is the path $%
B_{c}:v_{c}=u_{c,0},u_{c,1},...,u_{c,\alpha}$ of length $\alpha$, and is
called the \emph{last branch} of $T$. The triangle $\Delta_{c}$ associated
with $B_{c}$ is called the \emph{last triangle} of $T$. For brevity we also
write $B_{c}$ and $Q_{c}$ for $V(B_{c})$ and $V(Q_{c})$, respectively. We
denote the lengths of $B_{c}$ and $Q_{c}$ by $\ell(B_{c})$ and $\ell(Q_{c}) $%
, respectively; note that $\alpha=\ell(B_{c})\leq\ell(Q_{c})=d-c $. The
first and last edges of $\Delta_{c}$ on $P$ are $e_{f}=v_{c-\alpha}v_{c-%
\alpha+1}$ and $e_{\ell }=v_{c+\alpha-1}v_{c+\alpha}$, respectively.

\setcounter{algocf}{3}
\begin{algorithm}[H]
\KwIn{Shadow tree $T$ with no nested triangles, diametrical path $P=\left\{v_0, v_1, \dots v_d\right\}$}
\KwOut{A maximum multipacking $M$ of $T$}

$M \gets \varnothing$ \\
$S \gets \varnothing$ \\
\While{$T \neq P_1, P_2, P_3$} {

	\If{$c=i_1$ and $l(B_c) > l(Q_1)$}{
		$P \gets P - Q_1 + B_c$ \\
		\textbf{comment: }{$Q_1$ becomes $B_c$}
	}
	\If{$\Delta_{i_j}$ is a nested triangle}{
		$T \gets T- (B_{i_j} - \left\{v_{i_j}\right\})$ 
	}
  \uIf{number of trailing free edges $\geq 3$} {
    $M \gets M\cup \left\{v_d\right\}$ \\
    $P \gets P - \left\{v_d, v_{d-1}, v_{d-2}\right\}$ \\
    $T \gets T - \left\{v_d, v_{d-1}, v_{d-2}\right\}$ \\
  }
  \uElseIf{number of trailing free edges $= 2$} {
    $M \gets M\cup \left\{v_d\right\}$ \\
    $P \gets P - \left\{v_d, v_{d-1}, v_{d-2}\right\}$ \\
    $T \gets T - \left\{v_d, v_{d-1}, v_{d-2}\right\}$ \\
    $P \gets P - Q_c + B_c$ \\ 
    		\textbf{comment: }{$Q_c$ becomes $B_c$}
  }
  \uElseIf{number of trailing free edges $= 1$}{
  	$S \gets S \cup \left\{(v_c, v_{c-1})\right\}$\\
    $T \gets T \cup \left\{u_{c-1,1}, u_{c-1,2}, \dots, u_{c-1, \alpha}\right\}$\\
    $T \gets T - (B_c - \left\{v_c\right\})$
  }
  \Else{
    $T \gets T - \left\{u_{c,\alpha}\right\}$ 
  }
}
	$M \gets M\cup \left\{v_0\right\}$\;
	
	\ForAll{$(u,v) \in S$}{
		\If{$u \in M$}{
			$M \gets (M - \left\{u\right\}) \cup \left\{v\right\}$		
		}

	}
	
\Return{$M$}
\caption{{\sc FindTreeMP} finds a maximum multipacking of a tree}
\label{algo}
\end{algorithm}

We conclude this section by presenting an example of Algorithm \ref{algo} in use in Figure \ref{fig:algo2}.

\begin{figure}[H]
			\centering
				\begin{tikzpicture}[scale=0.9]
							\node [std](a) at (0,0)  {};
								\node [below=2mm of a](La){$a$};
							\node [std](b) at (1,0)  {};	
								\node [below=1mm of b](Lb){$b$};
							\node [std](c) at (2,0) {};	
								\node [below=2mm of c](Lc){$c$};
							\node [std](d) at (3,0)  {};	
								\node [below=1mm of d](Ld){$d$};
							\node [std](e) at (4,0)  {};	
								\node [below=2mm of e](Le){$e$};
							\node [std](f) at (5,0)  {};	
								\node [below=1mm of f](Lf){$f$};
							\node [std](g) at (6,0) {};	
								\node [below=2mm of g](Lg){$g$};
							\node [std](h) at (7,0)  {};
								\node [below=1.5mm of h](Lh){$h$};	
							\node [std](i) at (8,0)  {};	
								\node [below=1.5mm of i](Li){$i$};
							\node [std](j) at (9,0) {};						
								\node [below=1.5mm of j](Lj){$j$};
							\node [std](k) at (10,0) {};						
								\node [below=1.5mm of k](Lk){$k$};
							\node [std](l) at (11,0) {};						
								\node [below=1.5mm of l](Ll){$l$};
							\node [std](m) at (12,0) {};						
								\node [below=2mm of m](Lm){$m$};																								
								
						\draw (a)--(m);		
						
							\node [std](n) at (0.5,1)  {};	
								\node [above=1.5mm of n](Ln){$n$};
							\node [std](o) at (1.5,1) {};						
								\node [above=1.5mm of o](Lo){$o$};
							\node [std](p) at (3,1) {};						
								\node [left=0.5mm of p](Lp){$p$};
							\node [std](q) at (4,1) {};						
								\node [left=0.5mm of q](Lq){$q$};
							\node [std](r) at (6,1) {};						
								\node [right=0.5mm of r](Lr){$r$};					
							\node [std](s) at (7,1) {};						
								\node [right=1mm of s](Ls){$s$};						
							
							\draw (b)--(n);
							\draw (b)--(o);
							\draw (d)--(p);
							\draw (e)--(q);
							\draw (g)--(r);
							\draw (g)--(s);				
							
							\node [std](t) at (3.5,2) {};						
								\node [above=1.5mm of t](Lt){$t$};
							\node [std](u) at (4.5,2) {};						
								\node [above=1.5mm of u](Lu){$u$};
							\node [std](v) at (6,2) {};						
								\node [above=1.5mm  of v](Lq){$v$};
							\node [std](w) at (7,2) {};						
								\node [above=1.5mm  of w](Lw){$w$};					
							\node [std](x) at (8,2) {};						
								\node [above=1.5mm  of x](Lx){$x$};	
								
							\draw(q)--(t);
							\draw(u)--(q);
							\draw(r)--(v);
							\draw(s)--(w);
							\draw(s)--(x);	
							
				\node[below=1cm of g][color=black](BLa){$(1)$ A tree $T'$ with diametrical path $P = \left\{a,b,c,\dots, m\right\} $};				
				\end{tikzpicture}
				\vspace{0.5cm}

				\begin{tikzpicture}[scale=0.9]
							\node [std](a) at (0,0)  {};
								\node [below=2mm of a](La){$a$};
							\node [std](b) at (1,0)  {};	
								\node [below=1mm of b](Lb){$b$};
							\node [std](c) at (2,0) {};	
								\node [below=2mm of c](Lc){$c$};
							\node [std](d) at (3,0)  {};	
								\node [below=1mm of d](Ld){$d$};
							\node [std](e) at (4,0)  {};	
								\node [below=2mm of e](Le){$e$};
							\node [std](f) at (5,0)  {};	
								\node [below=1mm of f](Lf){$f$};
							\node [std](g) at (6,0) {};	
								\node [below=2mm of g](Lg){$g$};
							\node [std](h) at (7,0)  {};
								\node [below=1.5mm of h](Lh){$h$};	
							\node [std](i) at (8,0)  {};	
								\node [below=1.5mm of i](Li){$i$};
							\node [std](j) at (9,0) {};						
								\node [below=1.5mm of j](Lj){$j$};
							\node [std](k) at (10,0) {};						
								\node [below=1.5mm of k](Lk){$k$};
							\node [std](l) at (11,0) {};						
								\node [below=1.5mm of l](Ll){$l$};
							\node [std](m) at (12,0) {};						
								\node [below=2mm of m](Lm){$m$};																								
								
						\draw (a)--(m);		
						
							\node [std](n) at (1,1)  {};	
								\node [above=1.5mm of n](Ln){$n$};
							\node [std](q) at (4,1) {};						
								\node [left=0.5mm of q](Lq){$q$};
							\node [std](r) at (6,1) {};						
								\node [right=0.5mm of r](Lr){$r$};	
							
							\draw (b)--(n);
							\draw (e)--(q);
							\draw (g)--(r);
							
							\node [std](t) at (4,2) {};						
								\node [above=1.5mm of t](Lt){$t$};
							\node [std](v) at (6,2) {};						
								\node [above=1.5mm  of v](Lq){$v$};
								
							\draw(q)--(t);
							\draw(r)--(v);
														
				\draw[tr] (a)--(n)--(c);
				\draw[tr] (c)--(t)--(g);
				\draw[tr] (e)--(v)--(i);
				
				\node [mp](m) at (12,0) {};		
							
				\node[below=1cm of g][color=black](BLa){$(2)$ Create shadow  tree $T = S_{T',P}$ of $T'$ with no nested triangles.  Add $m$ to $M$.};				
				\end{tikzpicture}						
				
			\vspace{1cm}	
				\begin{tikzpicture}[scale=0.9]
							\node [std](a) at (0,0)  {};
								\node [below=2mm of a](La){$a$};
							\node [std](b) at (1,0)  {};	
								\node [below=1mm of b](Lb){$b$};
							\node [std](c) at (2,0) {};	
								\node [below=2mm of c](Lc){$c$};
							\node [std](d) at (3,0)  {};	
								\node [below=1mm of d](Ld){$d$};
							\node [std](e) at (4,0)  {};	
								\node [below=2mm of e](Le){$e$};
							\node [std](f) at (5,0)  {};	
								\node [below=1mm of f](Lf){$f$};
							\node [std](g) at (6,0) {};	
								\node [below=2mm of g](Lg){$g$};
							\node [std](h) at (7,0)  {};
								\node [below=1.5mm of h](Lh){$h$};	
							\node [std](i) at (8,0)  {};	
								\node [below=1.5mm of i](Li){$i$};
							\node [std](j) at (9,0) {};						
								\node [below=1.5mm of j](Lj){$j$};														
								
						\draw (a)--(j);		
						
							\node [std](n) at (1,1)  {};	
								\node [above=1.5mm of n](Ln){$n$};
							\node [std](q) at (4,1) {};						
								\node [left=0.5mm of q](Lq){$q$};
							\node [std](r) at (6,1) {};						
								\node [right=0.5mm of r](Lr){$r$};	
							
							\draw (b)--(n);
							\draw (e)--(q);
							\draw (g)--(r);
							
							\node [std](t) at (4,2) {};						
								\node [above=1.5mm of t](Lt){$t$};
							\node [std](v) at (6,2) {};						
								\node [above=1.5mm  of v](Lq){$v$};
								
							\draw(q)--(t);
							\draw(r)--(v);
														
				\draw[tr] (a)--(n)--(c);
				\draw[tr] (c)--(t)--(g);
				\draw[tr] (e)--(v)--(i);
				
				\node[below=1cm of e][color=black](BLa){$(3)$ Delete $\left\{k,l,m  \right\}$ from $T$.};	
				\end{tikzpicture}				
				
			\vspace{1cm}	
				\begin{tikzpicture}[scale=0.9]
							\node [std](a) at (0,0)  {};
								\node [below=2mm of a](La){$a$};
							\node [std](b) at (1,0)  {};	
								\node [below=1mm of b](Lb){$b$};
							\node [std](c) at (2,0) {};	
								\node [below=2mm of c](Lc){$c$};
							\node [std](d) at (3,0)  {};	
								\node [below=1mm of d](Ld){$d$};
							\node [std](e) at (4,0)  {};	
								\node [below=2mm of e](Le){$e$};
							\node [std](f) at (5,0)  {};	
								\node [below=1mm of f](Lf){$f$};
							\node [std](g) at (6,0) {};	
								\node [below=2mm of g](Lg){$g$};
							\node [std](h) at (7,0)  {};
								\node [below=1.5mm of h](Lh){$h$};	
							\node [std](i) at (8,0)  {};	
								\node [below=1.5mm of i](Li){$i$};
							\node [std](j) at (9,0) {};						
								\node [below=1.5mm of j](Lj){$j$};
								
						\draw (a)--(j);		
						
							\node [std](n) at (1,1)  {};	
								\node [above=1.5mm of n](Ln){$n$};
							\node [std](q) at (4,1) {};						
								\node [left=0.5mm of q](Lq){$q$};
							\node [std](r) at (5,1) {};						
								\node [right=0.5mm of r](Lr){$r$};		
							
							\draw (b)--(n);
							\draw (e)--(q);
							\draw (f)--(r);
							
							\node [std](t) at (4,2) {};						
								\node [above=1.5mm of t](Lt){$t$};
							\node [std](v) at (5,2) {};						
								\node [above=1.5mm  of v](Lq){$v$};
								
							\draw(q)--(t);
							\draw(r)--(v);
														
				\draw[tr] (a)--(n)--(c);
				\draw[tr] (c)--(t)--(g);
				\draw[tr] (d)--(v)--(h);
				
							\node [mp](j) at (9,0) {};			
														
				\node[below=1cm of e][color=black](BLa){$(4)$ Shift $B_c$ to $v_{c-1}$.  Add $(g,f)$ to $S$.  Add $j$ to $M$.};				
				\end{tikzpicture}			
\label{fig:algo}
\end{figure}								

\begin{figure}
			\centering
				\begin{tikzpicture}[scale=0.9]
							\node [std](a) at (0,0)  {};
								\node [below=2mm of a](La){$a$};
							\node [std](b) at (1,0)  {};	
								\node [below=1mm of b](Lb){$b$};
							\node [std](c) at (2,0) {};	
								\node [below=2mm of c](Lc){$c$};
							\node [std](d) at (3,0)  {};	
								\node [below=1mm of d](Ld){$d$};
							\node [std](e) at (4,0)  {};	
								\node [below=2mm of e](Le){$e$};
							\node [std](f) at (5,0)  {};	
								\node [below=1mm of f](Lf){$f$};
							\node [std](g) at (6,0) {};	
								\node [below=2mm of g](Lg){$g$};
								
						\draw (a)--(g);		
						
							\node [std](n) at (1,1)  {};	
								\node [above=1.5mm of n](Ln){$n$};
							\node [std](q) at (4,1) {};						
								\node [left=0.5mm of q](Lq){$q$};
							\node [std](r) at (5,1) {};						
								\node [right=0.5mm of r](Lr){$r$};					
				
							\draw (b)--(n);
							\draw (e)--(q);
							\draw (f)--(r);
							
							\node [std](t) at (4,2) {};						
								\node [above=1.5mm of t](Lt){$t$};
							\node [std](v) at (5,2) {};						
								\node [above=1.5mm  of v](Lq){$v$};
								
							\draw(q)--(t);
							\draw(r)--(v);
														
				\draw[tr] (a)--(n)--(c);
				\draw[tr] (c)--(t)--(g);
				\draw[tr] (d)--(v);
				
				\node[below=1cm of d][color=black](BLa){$(5)$ Delete $\left\{h,i,j  \right\}$ from $T$.};					\end{tikzpicture}	
		
		\vspace{1cm}		
				\begin{tikzpicture}[scale=0.9]
							\node [std](a) at (0,0)  {};
								\node [below=2mm of a](La){$a$};
							\node [std](b) at (1,0)  {};	
								\node [below=1mm of b](Lb){$b$};
							\node [std](c) at (2,0) {};	
								\node [below=2mm of c](Lc){$c$};
							\node [std](d) at (3,0)  {};	
								\node [below=1mm of d](Ld){$d$};
							\node [std](e) at (4,0)  {};	
								\node [below=2mm of e](Le){$e$};
							\node [std](f) at (5,0)  {};	
								\node [below=1mm of f](Lf){$f$};
							\node [std](r) at (6,0) {};	
								\node [below=2mm of r](Lr){$r$};
							\node [std](v) at (7,0)  {};	
								\node [below=1mm of v](Lv){$v$};			
								
						\draw (a)--(v);		
						
							\node [std](n) at (1,1)  {};	
								\node [above=1.5mm of n](Ln){$n$};
							\node [std](q) at (4,1) {};						
								\node [left=0.5mm of q](Lq){$q$};
							\node [std](g) at (5,1) {};						
								\node [right=0.5mm of g](Lg){$g$};
							
							\draw (b)--(n);
							\draw (e)--(q);
							\draw (f)--(g);
							
							\node [std](t) at (4,2) {};						
								\node [above=1.5mm of t](Lt){$t$};
								
							\draw(q)--(t);
														
				\draw[tr] (a)--(n)--(c);
				\draw[tr] (c)--(t)--(r);
				\draw[tr] (e)--(g)--(r);
				
				\node[below=1cm of e][color=black](BLa){$(6)$ Swap $Q_c$ and $B_c$.  $P$ becomes $P -B_c \cup Q_c$.};
		\end{tikzpicture}			

\vspace{1cm}
				\begin{tikzpicture}[scale=0.9]
							\node [std](a) at (0,0)  {};
								\node [below=2mm of a](La){$a$};
							\node [std](b) at (1,0)  {};	
								\node [below=1mm of b](Lb){$b$};
							\node [std](c) at (2,0) {};	
								\node [below=2mm of c](Lc){$c$};
							\node [std](d) at (3,0)  {};	
								\node [below=1mm of d](Ld){$d$};
							\node [std](e) at (4,0)  {};	
								\node [below=2mm of e](Le){$e$};
							\node [std](f) at (5,0)  {};	
								\node [below=1mm of f](Lf){$f$};
							\node [std](r) at (6,0) {};	
								\node [below=2mm of r](Lr){$r$};
							\node [std](v) at (7,0)  {};	
								\node [below=1mm of v](Lv){$v$};			
								
						\draw (a)--(v);		
						
							\node [std](n) at (1,1)  {};	
								\node [above=1.5mm of n](Ln){$n$};
							\node [std](q) at (4,1) {};						
								\node [left=0.5mm of q](Lq){$q$};
							
							\draw (b)--(n);
							\draw (e) -- (q);
							
							\node [std](t) at (4,2) {};						
								\node [above=1.5mm of t](Lt){$t$};
								
							\draw(q)--(t);
														
				\draw[tr] (a)--(n)--(c);
				\draw[tr] (c)--(t)--(r);
				
				\node[below=1cm of e][color=black](BLa){$(7)$ Delete the nested triangle $\Delta_f$.};
		\end{tikzpicture}			
		
		\vspace{1cm}		
				\begin{tikzpicture}			[scale=0.9]		
							\node [std](a) at (0,0)  {};
								\node [below=2mm of a](La){$a$};
							\node [std](b) at (1,0)  {};	
								\node [below=1mm of b](Lb){$b$};
							\node [std](c) at (2,0) {};	
								\node [below=2mm of c](Lc){$c$};
							\node [std](d) at (3,0)  {};	
								\node [below=1mm of d](Ld){$d$};
							\node [std](e) at (4,0)  {};	
								\node [below=2mm of e](Le){$e$};
							\node [std](f) at (5,0)  {};	
								\node [below=1mm of f](Lf){$f$};
							\node [std](r) at (6,0) {};	
								\node [below=2mm of r](Lr){$r$};
							\node [std](v) at (7,0)  {};	
								\node [below=1mm of v](Lv){$v$};			
								
						\draw (a)--(v);		
						
							\node [std](n) at (1,1)  {};	
								\node [above=1.5mm of n](Ln){$n$};
							\node [std](q) at (3,1) {};						
								\node [left=0.5mm of q](Lq){$q$};
							
							\draw (b)--(n);
							\draw (d) -- (q);
							
							\node [std](t) at (3,2) {};						
								\node [above=1.5mm of t](Lt){$t$};
								
							\draw(q)--(t);
														
				
				\draw[tr] (a)--(n)--(c);
				\draw[tr] (b)--(t)--(f);
				
				\node [mp](v) at (7,0) {};			
																	
				\node[below=1cm of e][color=black](BLa){$(8)$ Shift $B_c$ to $v_{c-1}$.  Add $(e,d)$ to $S$.  Add $v$ to $M$.};			
		\end{tikzpicture}	

\label{fig:algo2}
\end{figure}

\begin{figure}
			\centering
				\begin{tikzpicture}[scale=0.9]					
							\node [std](a) at (0,0)  {};
								\node [below=2mm of a](La){$a$};
							\node [std](b) at (1,0)  {};	
								\node [below=1mm of b](Lb){$b$};
							\node [std](c) at (2,0) {};	
								\node [below=2mm of c](Lc){$c$};
							\node [std](d) at (3,0)  {};	
								\node [below=1mm of d](Ld){$d$};
							\node [std](e) at (4,0)  {};	
								\node [below=2mm of e](Le){$e$};	
								
						\draw (a)--(e);		
						
							\node [std](n) at (1,1)  {};	
								\node [above=1.5mm of n](Ln){$n$};
							\node [std](q) at (3,1) {};						
								\node [left=0.5mm of q](Lq){$q$};
							
							\draw (b)--(n);
							\draw (d) -- (q);
							
							\node [std](t) at (3,2) {};						
								\node [above=1.5mm of t](Lt){$t$};
								
							\draw(q)--(t);
														
				
				\draw[tr] (a)--(n)--(c);
				\draw[tr] (b)--(t);
				
				\node[below=1cm of c][color=black](BLa) {$(9)$ Delete $\left\{f,r,v  \right\}$ from $T$.};
		\end{tikzpicture}		
		\hspace{2cm}
				\begin{tikzpicture}[scale=0.9]
							\node [std](a) at (0,0)  {};
								\node [below=2mm of a](La){$a$};
							\node [std](b) at (1,0)  {};	
								\node [below=1mm of b](Lb){$b$};
							\node [std](c) at (2,0) {};	
								\node [below=2mm of c](Lc){$c$};
							\node [std](d) at (3,0)  {};	
								\node [below=1mm of d](Ld){$d$};
							\node [std](q) at (4,0)  {};	
								\node [below=2mm of q](Lq){$q$};	
							\node [std](t) at (5,0)  {};	
								\node [below=2mm of t](Lt){$t$};									
								
						\draw (a)--(t);		
						
							\node [std](n) at (1,1)  {};	
								\node [above=1.5mm of n](Ln){$n$};
							\node [std](e) at (3,1)  {};		
							\node [above=1.5mm of e](Le){$e$};
							
							\draw (b)--(n);
							\draw (d) -- (e);
							
														
				\draw[tr] (a)--(n)--(c);
				\draw[tr] (c)--(e)--(q);
				

	\node[below=1cm of d][color=black](BLa){$(10)$ Swap $Q_c$ and $B_c$.};
		\end{tikzpicture}			
		
		\vspace{1cm}
				\begin{tikzpicture}[scale=0.9]					
							\node [std](a) at (0,0)  {};
								\node [below=2mm of a](La){$a$};
							\node [std](b) at (1,0)  {};	
								\node [below=1mm of b](Lb){$b$};
							\node [std](c) at (2,0) {};	
								\node [below=2mm of c](Lc){$c$};
							\node [std](d) at (3,0)  {};	
								\node [below=1mm of d](Ld){$d$};
							\node [std](q) at (4,0)  {};	
								\node [below=2mm of q](Lq){$q$};	
							\node [std](t) at (5,0)  {};	
								\node [below=2mm of t](Lt){$t$};									
								
						\draw (a)--(t);		
						
							\node [std](n) at (1,1)  {};	
								\node [above=1.5mm of n](Ln){$n$};
							\node [std](e) at (2,1) {};						
								\node [above=1.5mm of e](Le){$e$};
							
							\draw (b)--(n);
							\draw (c) -- (e);
							
														
				\draw[tr] (a)--(n)--(c);
				\draw[tr] (b)--(e)--(d);
				
				\node [mp](t) at (5,0) {};		

	\node[below=1cm of c][color=black](BLa){$(11)$ Shift $B_c$.  Add $(d,c)$ to $S$.  Add $t$ to $M$.};			
		\end{tikzpicture}			\hspace{1cm}
				\begin{tikzpicture}[scale=0.9]					
							\node [std](a) at (0,0)  {};
								\node [below=2mm of a](La){$a$};
							\node [std](b) at (1,0)  {};	
								\node [below=1mm of b](Lb){$b$};
							\node [std](c) at (2,0) {};	
								\node [below=2mm of c](Lc){$c$};
							\node [std](e) at (3,0)  {};	
							
								\node [below=2mm of e](Le){$e$};
						\draw (a)--(e);		
						
							\node [std](n) at (1,1)  {};	
								\node [above=1.5mm of n](Ln){$n$};
						
							\draw (b)--(n);
						
														
				\draw[tr] (a)--(n)--(c);
				

	\node[below=1cm of c][color=black](BLa){$(12)$ Delete $\left\{d,q,t  \right\}$ from $T$.};
		\end{tikzpicture}		

\vspace{0.5cm}
				\begin{tikzpicture}[scale=0.9]					
							\node [std](n) at (0,0)  {};
								\node [below=2mm of n](Ln){$n$};					
							\node [std](a) at (1,0)  {};
								\node [below=2mm of a](La){$a$};
							\node [std](b) at (2,0)  {};	
								\node [below=1mm of b](Lb){$b$};
							\node [std](c) at (3,0) {};	
								\node [below=2mm of c](Lc){$c$};
							\node [std](e) at (4,0)  {};								
								\node [below=2mm of e](Le){$e$};
								
						\draw (n)--(e);		
						
						
														
				
				\node [mp](e) at (4,0)  {};		
				
	\node[below=1cm of b][color=black](BLa){$(13)$ Shift $B_c$.  Add $(b,a)$ to $S$.  Add $e$ to $M$.};
		\end{tikzpicture}		
		
		\vspace{0.5cm}
				\begin{tikzpicture}[scale=0.9]					
							\node [std](n) at (0,0)  {};
								\node [below=2mm of n](Ln){$n$};					
							\node [std](a) at (1,0)  {};
								\node [below=2mm of a](La){$a$};
					
						\draw (n)--(a);		
						
				\node [mp](n) at (0,0)  {};		
								
	\node[below=1cm of a][color=black](BLa){$(14)$ Delete $\left\{b,c,e  \right\}$ from $T$.  Add $n$ to $M$.};
		\end{tikzpicture}	

\vspace{1cm}
				\begin{tikzpicture}[scale=0.9]					
			\node (n) at (0,0)  {$(15)$ $M = \left\{m,j,v,t,e,n\right\}$, $S = \left\{(g,f), (e,d), (d,c), (b,a) \right\}$.  Swap $e$ and $d$ in $M$.};			
		\end{tikzpicture}			
		
		\vspace{1cm}
		
		\begin{tikzpicture}[scale=0.9]
					
							\node [std](a) at (0,0)  {};
								\node [below=2mm of a](La){$a$};
							\node [std](b) at (1,0)  {};	
								\node [below=1mm of b](Lb){$b$};
							\node [std](c) at (2,0) {};	
								\node [below=2mm of c](Lc){$c$};
							\node [std](d) at (3,0)  {};	
								\node [below=1mm of d](Ld){$d$};
							\node [std](e) at (4,0)  {};	
								\node [below=2mm of e](Le){$e$};
							\node [std](f) at (5,0)  {};	
								\node [below=1mm of f](Lf){$f$};
							\node [std](g) at (6,0) {};	
								\node [below=2mm of g](Lg){$g$};
							\node [std](h) at (7,0)  {};
								\node [below=1.5mm of h](Lh){$h$};	
							\node [std](i) at (8,0)  {};	
								\node [below=1.5mm of i](Li){$i$};
							\node [std](j) at (9,0) {};						
								\node [below=1.5mm of j](Lj){$j$};
							\node [std](k) at (10,0) {};						
								\node [below=1.5mm of k](Lk){$k$};
							\node [std](l) at (11,0) {};						
								\node [below=1.5mm of l](Ll){$l$};
							\node [std](m) at (12,0) {};						
								\node [below=2mm of m](Lm){$m$};																								
								
						\draw (a)--(m);		
						
							\node [std](n) at (0.5,1)  {};	
								\node [above=1.5mm of n](Ln){$n$};
							\node [std](o) at (1.5,1) {};						
								\node [above=1.5mm of o](Lo){$o$};
							\node [std](p) at (3,1) {};						
								\node [left=0.5mm of p](Lp){$p$};
							\node [std](q) at (4,1) {};						
								\node [left=0.5mm of q](Lq){$q$};
							\node [std](r) at (6,1) {};						
								\node [right=0.5mm of r](Lr){$r$};					
							\node [std](s) at (7,1) {};						
								\node [right=1mm of s](Ls){$s$};						
							
							\draw (b)--(n);
							\draw (b)--(o);
							\draw (d)--(p);
							\draw (e)--(q);
							\draw (g)--(r);
							\draw (g)--(s);				
							
							\node [std](t) at (3.5,2) {};						
								\node [above=1.5mm of t](Lt){$t$};
							\node [std](u) at (4.5,2) {};						
								\node [above=1.5mm of u](Lu){$u$};
							\node [std](v) at (6,2) {};						
								\node [above=1.5mm  of v](Lq){$v$};
							\node [std](w) at (7,2) {};						
								\node [above=1.5mm  of w](Lw){$w$};					
							\node [std](x) at (8,2) {};						
								\node [above=1.5mm  of x](Lx){$x$};	
								
							\draw(q)--(t);
							\draw(u)--(q);
							\draw(r)--(v);
							\draw(s)--(w);
							\draw(s)--(x);	
				
				\node [mp](m1) at (m)  {};		
				\node [mp](m2) at (j)  {};		
				\node [mp](m3) at (v)  {};		
				\node [mp](m4) at (t)  {};		
				\node [mp](m5) at (d)  {};		
				\node [mp](m6) at (n)  {};		
							
				\node[below=1cm of g][color=black](BLa){$(16)$ A maximum multipacking $M$ of  $T'$.};				
				\end{tikzpicture}
				
\caption{Example of Algorithm \ref{algo}.}
\label{fig:algo3}
\end{figure}
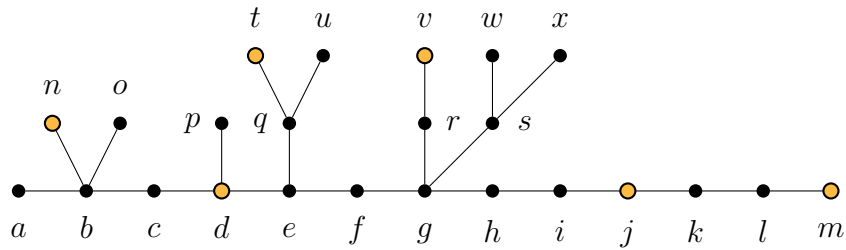


\newpage

\end{document}